\newtheorem{thm}{Theorem}[section]
\newtheorem{cor}[thm]{Corollary}
\newtheorem{lem}[thm]{Lemma}
\newtheorem{prop}[thm]{Proposition}
\theoremstyle{definition}
\theoremstyle{remark}
\theoremstyle{example}
\numberwithin{equation}{section}
\newcommand{\eps}{\varepsilon}
\newcommand{\sig}{\sigma}
\newcommand{\del}{\delta}
\newcommand{\B}{{\mathbb B}}
\newcommand{\D}{{\mathbb D}}
\newcommand{\R}{{\mathbb R}}
\newcommand{\SSS}{{\mathbb S}}
\newcommand{\C}{{\mathbb C}}
\newcommand{\Q}{{\mathbb Q}}
\newcommand{\N}{{\mathbb N}}
\newcommand{\conj}[1]{\overline{#1}}
\newcommand{\calB}{{\mathcal B}}
\newcommand{\calM}{{\mathcal M }}
\newcommand{\calN}{{\mathcal N}}
\newcommand{\calO}{{\mathcal O}}
\newcommand{\calQ}{{\mathcal Q}}
\newcommand{\Np}{\mathcal{N}_{p}}
\DeclareMathOperator{\Aut}{Aut}
\DeclareMathOperator{\Mult}{Mult}
\begin{document}

\title{On the structure of $\calN_p$-spaces in the ball}%

\date{\today}%

\author{Bingyang Hu$^\dag$, Le Hai Khoi$^\dag$ and Trieu Le}%

\address{(Hu) Department of Mathematics, University of Wisconsin, Madison, WI 53706, USA}%
\email{BingyangHu@math.wisc.edu}

\address{(Khoi) Division of Mathematical Sciences, School of Physical and Mathematical Sciences, Nanyang Technological University (NTU),
637371 Singapore}%
\email{lhkhoi@ntu.edu.sg}

\address{(Le) Department of Mathematics and Statistics, Mail Stop 942, University of Toledo, Toledo, OH 43606, USA}%
\email{trieu.le2@utoledo.edu}

\thanks{$^\dag$ Supported in part by MOE's AcRF Tier 1 grant M4011166.110 (RG24/13)}

\subjclass[2010]{32A36, 47B33}%

\keywords{$\calN_p$-space, multiplier, Moebius-invariant, weighted composition operator}%


\begin{abstract}
We study the structure of $\calN_p$-spaces in the ball. In particular, we show that any such space is Moebius-invariant and for $0<p\le n$, all $\calN_p$-spaces are different. Our results will be of important uses in the study of operator theory on $\calN_p$-spaces.
\end{abstract}

\maketitle


\section{Introduction}


\subsection{Basic notation and definitions}
Throughout the paper, $n$ is a positive integer. Let $\B$ be the open unit ball in $\C^n$ with $\SSS$ as its boundary. The space $\calO(\B)$ consists of all holomorphic functions in $\B$ with the compact-open topology. Banach and Hilbert spaces of holomorphic functions on $\B$ have attracted a great attention of researchers, including function theorists and operator theorists. The books \cite{HKZ, Zhu,ZZ} are excellent sources for information on these spaces, which include Hardy, Bloch, Bergman and Bergman-type spaces, among others.

A motivation of our work comes from the class of $Q_p$-spaces on the open unit disk $\D$ on the complex plane. The background on these spaces can be found in the book \cite{Xia}. In short, for $p>0$, the $Q_p$-space consists of functions in $\calO(\D)$ such that
$$
\sup_{a\in\D}\int_{\D}|f'(z)|^2(1-|\sigma_a(z)|^2)^p\;dA(z)<\infty.
$$
Here $\sigma_a(z)=(a-z)/(1-\conj{a}z)$ is the automorphism of $\D$ that changes $0$ and $a$; and $dA$ is the Lebesgue area measure on the plane, normalized so that $A(\D)=1$.
It is known that $Q_p$-spaces coincide with the classical Bloch space $\calB$ for $p\in(1,\infty)$; $Q_1$ is equal to BMOA, the space of holomorphic functions on $\D$ with bounded mean oscillation; and for $p\in(0,1)$, the $Q_p$-spaces are all different.

If, in the definition of the $Q_p$-space, $f'(z)$ is replaced by $f(z)$, then we have the so-called $\calN_p$-space in the unit disk $\D$, which was first introduced and studied in \cite{Pal} and then in \cite{Uek}. The space $\calN_p$ consists of functions in $\calO(\D)$ for which
$$
\sup_{a\in\D}\int_{\D}|f(z)|^2(1-|\sig_a(z)|^2)^p\;dA(z)<\infty.
$$

Some properties of $\calN_p$-spaces are: for $p>1$, the $\calN_p$-space coincides with the Begrman-type space $A^{-1}$ consisting of holomorphic functions on the disk for which $\sup_{z \in \D} |f(z)|(1-|z|^2)<\infty$; and for $p\in(0,1]$, the $\calN_p$-spaces are all different.

Several results on different properties of composition operators as well as weighted composition operators acting on $\calN_p$-spaces, or from $\calN_p$-spaces into Bergman-type spaces have been obtained in \cite{Pal, Uek}.

\subsection{$\calN_p$-spaces in the ball}

With the aim to generalize $\calN_p$-spaces to higher dimensions, namely in the unit ball $\B$ of $\C^n$, in \cite{HK1}, for $p>0$, the first two authors introduced the $\calN_p$-space of $\B$ as follows:
\begin{align*}
&\calN_p = \calN_p(\B)\\
& = \left\{f \in \calO(\B): \|f\|_p=\sup_{a \in \B} \left(\int_{\B} |f(z)|^2(1-|\Phi_a(z)|^2)^pdV(z)\right)^{1/2}<\infty\right\},
\end{align*}
where $dV(z)$ is the normalized volume measure over $\B$ and $\Phi_a \in \textrm{Aut}(\B)$ is the involutive automorphism that interchanges $0$ and $a \in \B$ (see, e.g. \cite[Chapter 2]{Rud}).

For $p>0$, we denote by $A^{-p}(\B)$ a Bergman-type space consisting of functions $f\in\calO(\B)$ for which $|f|_p=\sup_{z \in \B} |f(z)|(1-|z|^2)^p<\infty$. In \cite{HK1}, several basic properties of $\calN_p$-spaces have been proved, in connection with the Bergman-type spaces $A^{-q}$. In particular, an embedding theorem for $\calN_p$-spaces and $A^{-q}$ was obtained, together with other useful properties.

\begin{thm}\label{basic}\cite{HK1}
The following statements hold:
\begin{enumerate}
\item [(a)] For $p>q>0$, we have $H^{\infty}\hookrightarrow\calN_q\hookrightarrow\calN_p\hookrightarrow A^{-\frac{n+1}{2}}$.
\item [(b)] For $p>0$, if $p>2k-1, k\in(0,\frac{n+1}{2}]$, then $A^{-k}\hookrightarrow \calN_p$. In particular, when $p>n$, $\calN_p=A^{-\frac{n+1}{2}}$.
\item [(c)] $\calN_p$  is a functional Banach space with the norm $\|\cdot\|_p$, and moreover, its norm topology is stronger than the compact-open topology.
\item [(d)] For $0<p<\infty$, $\calB\hookrightarrow\calN_p$, where $\calB$ is the Bloch space in $\B$.
\end{enumerate}
\end{thm}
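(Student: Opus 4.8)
The plan is to establish the four parts in turn, using two standard facts about the unit ball. The first is the identity $1-|\Phi_a(z)|^2=\frac{(1-|a|^2)(1-|z|^2)}{|1-\langle z,a\rangle|^2}$, which rewrites each defining integral as $(1-|a|^2)^p\int_\B\frac{(1-|z|^2)^p\,|f(z)|^2}{|1-\langle z,a\rangle|^{2p}}\,dV(z)$. The second is the Forelli--Rudin type estimate: for $c>-1$ and $t\in\R$, the integral $\int_\B\frac{(1-|z|^2)^c}{|1-\langle z,a\rangle|^{n+1+c+t}}\,dV(z)$ is bounded uniformly in $a\in\B$ when $t<0$, is $\asymp\log\frac{1}{1-|a|^2}$ when $t=0$, and is $\asymp(1-|a|^2)^{-t}$ when $t>0$; matching exponents will reproduce exactly the numerical thresholds in the statement. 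The only other ingredient, needed for the embedding into $A^{-\frac{n+1}{2}}$, is the sub-mean-value inequality $|f(z)|^2\lesssim(1-|z|^2)^{-(n+1)}\int_{E(z,r)}|f|^2\,dV$ on a fixed Bergman metric ball $E(z,r)$, together with $V(E(z,r))\asymp(1-|z|^2)^{n+1}$.

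For (a): the middle inclusion $\calN_q\hookrightarrow\calN_p$ ($q<p$) is immediate and norm-nonincreasing, since $0<1-|\Phi_a(z)|^2\le1$ forces $(1-|\Phi_a(z)|^2)^p\le(1-|\Phi_a(z)|^2)^q$ pointwise. For $H^\infty\hookrightarrow\calN_q$, factor out $\|f\|_\infty^2$ and apply the two tools to $\int_\B(1-|\Phi_a(z)|^2)^q\,dV(z)=(1-|a|^2)^q\int_\B\frac{(1-|z|^2)^q}{|1-\langle z,a\rangle|^{2q}}\,dV(z)$ (here $c=q$, $t=q-n-1$), obtaining a uniform bound after checking the three ranges of $t$. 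For $\calN_p\hookrightarrow A^{-\frac{n+1}{2}}$, apply the sub-mean-value inequality on $E(z,r)$ and note that $1-|\Phi_z(w)|^2$ is bounded below on $E(z,r)$, so $\int_{E(z,r)}|f|^2\,dV\lesssim\int_\B|f(w)|^2(1-|\Phi_z(w)|^2)^p\,dV(w)\le\|f\|_p^2$, whence $|f(z)|(1-|z|^2)^{(n+1)/2}\lesssim\|f\|_p$. For (b): for $f\in A^{-k}$, bound $|f(z)|^2\le|f|_k^2(1-|z|^2)^{-2k}$ and reduce to $(1-|a|^2)^p\int_\B\frac{(1-|z|^2)^{p-2k}}{|1-\langle z,a\rangle|^{2p}}\,dV(z)$; boundary integrability requires $p-2k>-1$, i.e.\ $p>2k-1$, and the estimate (now $c=p-2k$, $t=p+2k-n-1$) gives, after restoring the factor $(1-|a|^2)^p$, a quantity of size $(1-|a|^2)^{n+1-2k}$ or smaller, bounded precisely when $k\le\frac{n+1}{2}$. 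Taking $k=\frac{n+1}{2}$ gives $A^{-\frac{n+1}{2}}\hookrightarrow\calN_p$ whenever $p>n$, which together with the last inclusion of (a) yields $\calN_p=A^{-\frac{n+1}{2}}$, the norms being equivalent by the open mapping theorem.

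For (c): point evaluations are bounded since (a) gives $|f(z)|\lesssim\|f\|_p(1-|z|^2)^{-(n+1)/2}$, and the same bound shows that $\|\cdot\|_p$-convergence forces locally uniform convergence, so the norm topology dominates the compact-open topology; in particular $\|f\|_p=0$ implies $f\equiv0$, so $\|\cdot\|_p$ is a genuine norm (homogeneity is clear and the triangle inequality follows from Minkowski's inequality applied inside the supremum over $a$). For completeness, a $\|\cdot\|_p$-Cauchy sequence is compact-open Cauchy, hence converges locally uniformly to some $f\in\calO(\B)$; restricting the defining integrals to $r\B$, letting the sequence index tend to infinity using the local uniform convergence, and then letting $r\uparrow1$ by monotone convergence gives $f\in\calN_p$ and $\|f_j-f\|_p\to0$. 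For (d): a Bloch function satisfies $|f(z)|\lesssim(\|f\|_{\calB}+|f(0)|)\log\frac{2}{1-|z|^2}$, so $\calB\hookrightarrow A^{-k}$ for every $k>0$; given $p>0$, choose $k\in\bigl(0,\min\{\tfrac{n+1}{2},\tfrac{p+1}{2}\}\bigr)$, so that $0<k\le\frac{n+1}{2}$ and $p>2k-1$, and compose $\calB\hookrightarrow A^{-k}\hookrightarrow\calN_p$ via (b). (One can instead argue directly, absorbing the logarithm via $\bigl(\log\tfrac{2}{1-|z|^2}\bigr)^2\lesssim_{\eps}(1-|z|^2)^{-\eps}$ and reapplying the Forelli--Rudin estimate.)

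The genuinely delicate points are the two sharp embeddings, $\calN_p\hookrightarrow A^{-\frac{n+1}{2}}$ and the threshold $p>2k-1$, $0<k\le\frac{n+1}{2}$ for $A^{-k}\hookrightarrow\calN_p$: there one must track the exponents carefully so that the Forelli--Rudin estimate actually applies (the condition $c>-1$) and the surviving power of $1-|a|^2$ stays nonnegative. Everything else is routine bookkeeping once the two tools above are in place.
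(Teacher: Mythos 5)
Theorem \ref{basic} is quoted from \cite{HK1}; the present paper contains no proof of it, so there is no internal argument to compare against, and judged on its own your proof is correct. The route you take --- the identity $1-|\Phi_a(z)|^2=(1-|a|^2)(1-|z|^2)/|1-\langle z,a\rangle|^2$, the Forelli--Rudin estimates of \cite[Theorem 1.12]{Zhu}, and the sub-mean-value inequality on Bergman metric balls for the embedding $\calN_p\hookrightarrow A^{-\frac{n+1}{2}}$ --- is the standard one behind \cite{HK1} and is the same machinery this paper uses elsewhere (e.g.\ Proposition \ref{L:normEstimateNp}), and your exponent bookkeeping at the thresholds $p>2k-1$, $k\le\frac{n+1}{2}$ checks out.
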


For a holomorphic self-mapping $\varphi$ of $\B$ and a holomorphic function $u: \B \to \C$, the linear operator $W_{u,\varphi}: \calO(\B) \to \calO(\B)$ is called a \emph{weighted composition operator} with symbols $u$ and $\varphi$ if
$$
W_{u, \varphi}(f)(z)=u(z) \cdot (f\circ\varphi(z)), f \in \calO(\B), z \in \B.
$$
Observe that if $u$ is identically $1$, then $W_{u, \varphi}=C_\varphi$ is the \emph{composition operator}, and if $\varphi$ is the identity, then $W_{u, \varphi}=M_u$ is the \emph{multiplication operator}. The books \cite{CM, Sha} are excellent sources on composition operators on analytic function spaces.

Considering weighted composition operators between $\calN_p$ and Bergman-type spaces $A^{-q}$, the properties above allowed us to prove criteria for boundedness and compactness of these operators \cite[Theorems 3.2 and 3.4]{HK1}. Furthermore, in \cite{HK2} the compact differences of weighted composition operators $W_{u,\varphi}$ acting from $\calN_p$-space to the space $A^{-q}$ were considered. Different properties stated in Theorem \ref{basic} were used.

The structure of this paper is as follows: in Section 2 we show that the space of multipliers of the $\calN_p$-space is precisely the space $H^\infty$ of bounded holomorphic functions on $\B$. We also show that $\calN_p$-space is a Moebius invariant, which is derived from the isometry property of some class of weighted composition operators. Section 3 deals with the closure of polynomials in $\calN_p$-spaces. Here we introduce the little space $\calN^0_p$ of $\calN_p$ which plays an important role in the proof of the density of polynomials in $\calN_p$. In Section 4 we establish criteria for a function to be in $\calN_p$ (respectively, in $\calN^0_p$) via $p$-Carleson measure (respectively, vanishing $p$-Carleson measure). Section 5 is devoted to the result that for small values of $p$ (that is $0<p\le n$), all $\calN_p$-spaces are different, and so the relationship between $\calN_p$-spaces is given completely. Here functions in the Hadamard gap class are used.

We remark that although the $N_p$-spaces are closely related to the $Q_p$-spaces, our approach and techniques used in the present paper are different from those for $Q_p$-spaces. Moreover, they have their own interests.

Throughout this paper, $d\sigma$ denotes the normalized surface measure on the boundary $\mathbb S$ of $\B$. For $a, b \in \R$, $a \lesssim b$ ($a \gtrsim b$, respectively) means there exists a positive number $C$, which is independent of $a$ and $b$, such that $a \leq Cb$ ($ a \geq Cb$, respectively). If both $a \lesssim b$ and $a \gtrsim b$ hold, we write $a \simeq b$.


\section{Multipliers and isometric weighted composition operators}


\subsection{Multipliers and $\calM$-invariance of $\calN_p$-spaces}

We first describe the space $\Mult(\Np)$ of multipliers of $\Np$. Recall that a function $u:\B\rightarrow\C$ is a \emph{multiplier} of $\Np$ if $uf$ belongs to $\Np$ for all $f$ in $\Np$. An application of the the closed graph theorem shows that for any $u\in\Mult(\Np)$, the multiplication operator $M_{u}$ is bounded on $\Np$.

\begin{prop}
\label{T:Multipliers}
For any $p>0$, we have $\Mult(\Np) = H^{\infty}$. Furthermore, for any $u\in H^{\infty}$, $\|M_{u}\| = \|u\|_{\infty}$.
\end{prop}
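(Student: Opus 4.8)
The plan is to prove two inclusions, $H^\infty \subseteq \Mult(\Np)$ with $\|M_u\| \le \|u\|_\infty$, and $\Mult(\Np) \subseteq H^\infty$ with $\|u\|_\infty \le \|M_u\|$; together these give equality of the spaces and of the norms. The first inclusion is the easy direction. If $u \in H^\infty$ and $f \in \Np$, then for every $a \in \B$,
\[
\int_{\B} |u(z)f(z)|^2 (1-|\Phi_a(z)|^2)^p\, dV(z) \le \|u\|_\infty^2 \int_{\B} |f(z)|^2 (1-|\Phi_a(z)|^2)^p\, dV(z),
\]
so taking the supremum over $a$ gives $\|uf\|_p \le \|u\|_\infty \|f\|_p$. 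Hence $u \in \Mult(\Np)$ and $\|M_u\| \le \|u\|_\infty$.

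For the reverse inclusion, suppose $u \in \Mult(\Np)$, so $M_u$ is bounded on $\Np$ by the closed graph theorem. The key point is to test $M_u$ against a family of functions that concentrate near a prescribed boundary-type behavior at an arbitrary point. Since constants lie in $\Np$ (indeed $H^\infty \hookrightarrow \Np$ by Theorem \ref{basic}(a)), applying $M_u$ to the constant function $1$ already shows $u \in \Np \subseteq A^{-(n+1)/2}$, so $u$ is at least in a Bergman-type space; but we need the stronger conclusion $u \in H^\infty$. The standard device is to use normalized reproducing-kernel-type test functions: for $w \in \B$, consider $f_w(z) = \bigl((1-|w|^2)/(1-\langle z, w\rangle)^2\bigr)^{s}$ for a suitable exponent $s$ (chosen so that $\{f_w\}$ is a bounded subset of $\Np$, using the embedding $A^{-k}\hookrightarrow \Np$ from Theorem \ref{basic}(b) or a direct estimate on the defining integral), and evaluate. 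Because $\Np$ has norm topology stronger than compact-open (Theorem \ref{basic}(c)), point evaluations are bounded, and one gets $|u(w)|\, |f_w(w)| \lesssim \|M_u\|\, \|f_w\|_{A^{-(n+1)/2}} \lesssim \|M_u\|\, |f_w|_{(n+1)/2}$; since $|f_w(w)| \simeq (1-|w|^2)^{-2s}$ and the $A^{-(n+1)/2}$-seminorm of $f_w$ behaves like $(1-|w|^2)^{-2s+(n+1)/2}$ up to constants with the right normalization, the powers of $(1-|w|^2)$ cancel and we conclude $|u(w)| \lesssim \|M_u\|$ uniformly in $w$, i.e. $u \in H^\infty$ with $\|u\|_\infty \lesssim \|M_u\|$.

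The main obstacle is getting the norm comparison $\|u\|_\infty \le \|M_u\|$ with constant exactly $1$, rather than just $\|u\|_\infty \lesssim \|M_u\|$. A cleaner route that yields the sharp constant is an iteration/spectral-radius argument: for any multiplier $u$ and any $k \in \N$, $u^k$ is also a multiplier with $M_{u^k} = M_u^k$, so $\|u^k\|_\infty^{1/k}$ type bounds... but since we do not yet know $u \in H^\infty$ a priori, instead one argues on the Bergman-type side where the norm is a genuine sup: from $M_u$ bounded on $\Np$ and the continuous inclusion $\Np \hookrightarrow A^{-(n+1)/2}$, together with $\|M_{u^k}f\|_{A^{-(n+1)/2}} = \|u^k f\|_{A^{-(n+1)/2}} \le \|M_u\|^k \|f\|_p$ for $f$ in the unit ball of $\Np$ (applying $M_u$ $k$ times inside $\Np$ then embedding once at the end), one takes $f \equiv 1$, extracts $|u(z)^k|(1-|z|^2)^{(n+1)/2} \lesssim \|M_u\|^k$, takes $k$-th roots and lets $k \to \infty$: the factor $(1-|z|^2)^{(n+1)/2}$ and the embedding constant both disappear, giving $|u(z)| \le \|M_u\|$ for every $z$, hence $\|u\|_\infty \le \|M_u\|$. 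Combined with the first paragraph's $\|M_u\| \le \|u\|_\infty$, this finishes the proof.
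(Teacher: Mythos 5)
Your final argument --- applying $M_u$ repeatedly to the constant function $1$, using the embedding $\Np\hookrightarrow A^{-(n+1)/2}$ to get $|u(z)|^{k}(1-|z|^2)^{(n+1)/2}\lesssim \|M_u\|^{k}$, then taking $k$-th roots and letting $k\to\infty$ --- is exactly the paper's proof, and combined with your easy direction it is correct. The reproducing-kernel detour in your middle paragraph only yields $\|u\|_\infty\lesssim\|M_u\|$ and is never used, so it can simply be deleted.
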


\begin{proof}
For $u\in H^{\infty}$ and $f\in\Np$, the function $uf$ belongs to $\calO(\B)$ and it follows immediately from the definition of the norm in $\Np$ that
\begin{align*}
\|uf\|_{p} \leq \|u\|_{\infty}\|f\|_{p}.
\end{align*}
This shows that $H^{\infty}\subset\Mult(\Np)$ and $\|M_u\|\leq\|u\|_{\infty}$. Now suppose that $u$ is an element in $\Mult(\Np)$. For any integer $m\geq 1$, we have
\begin{align*}
\|u^m\|_{p} = \|M_u^{m}1\|_{p} \leq \|M_u\|^{m}\|1\|_{p}.
\end{align*}
Combining with Theorem \ref{basic}, we obtain a positive constant $C>0$ independent of $u, m$ and $z$ such that for any $z\in\B$
\begin{align*}
|u^{m}(z)| \leq C(1-|z|^2)^{-(n+1)/2}\|u^m\|_{p} \leq C(1-|z|^2)^{-(n+1)/2}\|1\|_{p}\|M_u\|^{m}.
\end{align*}
Consequently,
\begin{align*}
|u(z)| \leq \Big(C(1-|z|^2)^{-(n+1)/2}\|1\|_{p}\Big)^{1/m}\|M_u\|.
\end{align*}
Letting $m\rightarrow\infty$, we conclude that $|u(z)|\leq \|M_{u}\|$ for all $z\in\B$. Therefore, $u$ belongs to $H^{\infty}$ and $\|u\|_{\infty}\leq\|M_u\|$. This completes the proof of the theorem.
\end{proof}

By using weighted composition operators with particular symbols, we obtain an alternate description of the norm in $\Np$.

For each $w \in \B$, set
\begin{equation}\label{probe}
k_w(z)=\left( \frac{1-|w|^2}{(1-\langle z,w \rangle)^2} \right)^{\frac{n+1}{2}},\ z \in \B.
\end{equation}
Such $k_w$ is a normalized reproducing kernel function in the Bergman space $A^2$. By \cite[Lemma 3.1]{HK1}, we have $k_w \in \calN_p$ and $\displaystyle\sup_{w\in\B}\|k_w\|_p\le1$. Note that for all $w\in\B$, we have $k_w(w)=(1-|w|^2)^{-(n+1)/2}$.

Furthermore, for any $\Phi\in\Aut(\B)$, by \cite[Theorem 2.2.5]{Rud}, there exists a unitary operator $U$ such that $\Phi = U\Phi_a$, where $a=\Phi^{-1}(0)$. This shows that $|\Phi(z)|=|\Phi_a(z)|$ for all $z\in\B$. Consequently, we obtain
\begin{align*}
\|f\|_{p} & = \sup_{a\in\B}\Big(\int_{\B}|f(z)|^2(1-|\Phi_a(z)|^2)^{p}\,dV(z)\Big)^{1/2}\\
& = \sup_{\Phi\in\Aut(\B)}\Big(\int_{\B}|f(z)|^2(1-|\Phi(z)|^2)^{p}\,dV(z)\Big)^{1/2}\\
& = \sup_{\Phi\in\Aut(\B)}\Big(\int_{\B}|f(z)|^2(1-|\Phi^{-1}(z)|^2)^{p}\,dV(z)\Big)^{1/2}.
\end{align*}

For $\Phi\in\Aut(\B)$, let $a=\Phi^{-1}(0)$. Denote by $W_{\Phi}$ the weighted composition operator $W_{k_a,\Phi}$. By the change of variables $z=\Phi(w)$ (see \cite[Theorem 2.2.6]{Rud}), we obtain
\begin{align*}
& \int_{\B}|f(z)|^2(1-|\Phi^{-1}(z)|^2)^{p}\,dV(z)\\
& \qquad\qquad = \int_{\B}|f(\Phi(w))|^2(1-|w|^2)^p\Big(\dfrac{1-|a|^2}{|1-\langle w,a\rangle|^2}\Big)^{n+1}\,dV(w)\\
& \qquad\qquad = \int_{\B}|f(\Phi(w))|^2|k_{a}(w)|^{2}(1-|w|^2)^p\,dV(w)\\
& \qquad\qquad = \|W_{\Phi}f\|^2_{A^2_p}.
\end{align*}
Here $A^2_p$ is the weighted Bergman space over $\B$ defined by
\begin{align*}
A^2_p:=\left\{f\in\mathcal{O}(\B): \|f\|_{A^2_p} = \Big(\int_{\B}|f(z)|^2(1-|z|^2)^p\,dV(z)\Big)^{1/2}<\infty \right\}.
\end{align*}
Combing the above formulas, we have
\begin{align}
\label{Eqn:normNp}
\|f\|_{p} & = \sup\Big\{\|W_{\Phi}f\|_{A^2_p}: \Phi\in\Aut(\B)\Big\}.
\end{align}

It can be checked by a direct calculation that for any two automorphisms $\Phi$ and $\Psi$ in $\Aut(\B)$, there exists a  complex number $\lambda$ with modulus one such that $W_{\Phi}W_{\Psi} = \lambda W_{\Psi\circ\Phi}$. Consequently,
\begin{align*}
\|W_{\Psi}f\|_{p} & = \sup\Big\{\|W_{\Phi}W_{\Psi}f\|_{A^2_p}: \Phi\in\Aut(\B)\Big\}\\
& = \sup\Big\{\|W_{\Psi\circ\Phi}f\|_{A^2_p}: \Phi\in\Aut(\B)\Big\} = \|f\|_{p}.
\end{align*}
The argument above proves the following result.

\begin{thm}
\label{T:isometricWCOs}
For any automorphism $\Psi$ of the unit ball $\B$, the weighted composition operator $W_{\Psi}$ is a surjective isometry on $\mathcal{N}_p$.
\end{thm}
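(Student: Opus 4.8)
The plan is to read off the theorem almost entirely from the computations already laid out in the excerpt, so the ``proof'' is really just an organization of those ingredients. First I would recall the change-of-variables identity derived above: for $\Phi\in\Aut(\B)$ with $a=\Phi^{-1}(0)$, one has
\begin{align*}
\int_{\B}|f(z)|^2\bigl(1-|\Phi^{-1}(z)|^2\bigr)^{p}\,dV(z) = \|W_{\Phi}f\|^2_{A^2_p},
\end{align*}
together with the reformulation $\|f\|_p = \sup\{\|W_{\Phi}f\|_{A^2_p}:\Phi\in\Aut(\B)\}$ in \eqref{Eqn:normNp}. The isometry claim then follows from the cocycle-type identity $W_{\Phi}W_{\Psi} = \lambda\, W_{\Psi\circ\Phi}$ for a unimodular constant $\lambda=\lambda(\Phi,\Psi)$: since $|\lambda|=1$, we get $\|W_{\Phi}W_{\Psi}f\|_{A^2_p} = \|W_{\Psi\circ\Phi}f\|_{A^2_p}$, and as $\Phi$ ranges over $\Aut(\B)$ so does $\Psi\circ\Phi$, hence $\|W_{\Psi}f\|_p = \sup_{\Phi}\|W_{\Psi\circ\Phi}f\|_{A^2_p} = \|f\|_p$. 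This shows $W_{\Psi}$ is a well-defined isometry of $\calN_p$ into itself.

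Next I would verify surjectivity, which is the only point not literally spelled out above. The natural argument is to apply the isometry statement to $\Psi^{-1}$: $W_{\Psi^{-1}}$ also maps $\calN_p$ isometrically into $\calN_p$, and the composition identity gives $W_{\Psi}W_{\Psi^{-1}} = \lambda' W_{\mathrm{id}} = \lambda' \cdot (\text{unimodular scalar})\cdot I$ up to the normalizing weight; more precisely $W_{\Psi}W_{\Psi^{-1}}$ is a unimodular scalar multiple of the identity operator on $\calO(\B)$, hence invertible on $\calN_p$. The same holds for $W_{\Psi^{-1}}W_{\Psi}$. Therefore $W_{\Psi}$ has a two-sided inverse (a scalar multiple of $W_{\Psi^{-1}}$) and is in particular surjective; being an isometry onto $\calN_p$, it is a surjective isometry. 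One should double-check that $W_{\Psi}$ genuinely preserves $\calN_p$ rather than merely being norm-preserving on its domain — but the finiteness of $\|W_{\Psi}f\|_p = \|f\|_p<\infty$ for $f\in\calN_p$ takes care of that, and holomorphy of $W_{\Psi}f = k_a\cdot(f\circ\Psi)$ is automatic.

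I expect the only genuinely delicate step to be the justification of the identity $W_{\Phi}W_{\Psi} = \lambda W_{\Psi\circ\Phi}$ with a \emph{unimodular} constant, i.e. the cocycle property of the weights $k_a$ under composition of automorphisms. This is the ``direct calculation'' referenced in the excerpt: writing $W_{\Phi}f = k_{\Phi^{-1}(0)}\cdot(f\circ\Phi)$ and similarly for $\Psi$, one composes and must identify $k_{\Psi^{-1}(0)}\cdot\bigl(k_{\Phi^{-1}(0)}\circ\Psi\bigr)$ with a constant times $k_{(\Psi\circ\Phi)^{-1}(0)}$. The key facts needed are the transformation law for the Bergman kernel, $(1-\langle \Psi(z),\Psi(w)\rangle) = \overline{c_w}\,c_z^{-1}(1-\langle z,w\rangle)$ type identities from \cite[Chapter 2]{Rud}, which force the leftover factor to have constant modulus because all the $z$-dependence must cancel (otherwise $\|W_{\Phi}W_{\Psi}f\|_{A^2_p}$ could not equal $\|W_{\Psi\circ\Phi}f\|_{A^2_p}$ for all $f$). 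Since the excerpt explicitly grants this identity, I would simply cite it; if a fully self-contained argument were wanted, one could instead bypass the constant altogether by noting that $W_{\Phi}$ maps $\calN_p$ into itself isometrically for \emph{every} $\Phi$ (already proved), apply this to both $\Psi$ and $\Psi^{-1}$, and conclude that the composite $W_{\Psi}W_{\Psi^{-1}}$ is an isometric bijection of the closed subspace $\operatorname{ran}W_{\Psi}$ with $\calN_p$, forcing $\operatorname{ran}W_{\Psi} = \calN_p$.
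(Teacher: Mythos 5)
Your argument is correct and is essentially the paper's own proof: the paper derives the identity $\|f\|_p=\sup_{\Phi\in\Aut(\B)}\|W_\Phi f\|_{A^2_p}$ and the relation $W_\Phi W_\Psi=\lambda W_{\Psi\circ\Phi}$ with $|\lambda|=1$, and then concludes the isometry exactly as you do, leaving surjectivity implicit in the same cocycle identity (applied with $\Psi^{-1}$, using $W_{\mathrm{id}}=I$ since $k_0\equiv 1$) that you spell out explicitly. No substantive differences.
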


Recall that a space $\mathcal{X}$ of functions defined on $\B$ is said to be \emph{Moebius-invariant}, or simply \emph{$\calM$-invariant}, if $f \circ \Phi \in \mathcal{X}$ for every $f \in \mathcal{X}$ and every $\Phi \in \textrm{Aut}(\B)$ (see, e.g., \cite{Rud}). As a corollary to Theorem \ref{T:isometricWCOs}, we obtain

\begin{cor}
The space $\calN_p$ is $\calM$-invariant. Moreover, for any $\Phi \in \Aut(\B)$, we have
\begin{equation} \label{normPsi}
\|C_\Phi\|=\|M_{1/k_a}\| = \left( \frac{1+|a|}{1-|a|}\right)^{\frac{n+1}{2}},
\end{equation}
where $a=\Phi^{-1}(0)$.
\end{cor}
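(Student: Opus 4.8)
The plan is to deduce the corollary directly from Theorem \ref{T:isometricWCOs} by a factorization of the composition operator $C_\Phi$ through the isometry $W_\Phi$. Since $W_\Phi = W_{k_a,\Phi}$ acts on $f$ by $W_\Phi f = k_a\cdot(f\circ\Phi)$, we can write $f\circ\Phi = \tfrac{1}{k_a}\cdot W_\Phi f$, i.e.\ $C_\Phi = M_{1/k_a}\,W_\Phi$. First I would note that $k_a$ is zero-free and holomorphic on $\B$ (it is a power of $1-\langle z,a\rangle$ in the denominator only), so $1/k_a\in H^\infty$; by Proposition \ref{T:Multipliers} the operator $M_{1/k_a}$ is bounded on $\calN_p$. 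Composing with the surjective isometry $W_\Phi$ then shows $C_\Phi$ maps $\calN_p$ into itself boundedly, which is exactly $\calM$-invariance. (Alternatively one can observe directly from the integral defining $\|f\circ\Phi\|_p$, after the change of variables $z=\Phi^{-1}(w)$, that $\calN_p$ is $\calM$-invariant, but the factorization route is cleaner and immediately yields the norm identity.)

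For the norm computation, the factorization $C_\Phi = M_{1/k_a}W_\Phi$ together with $W_\Phi$ being a surjective isometry gives $\|C_\Phi\| = \|M_{1/k_a}W_\Phi\| = \|M_{1/k_a}\|$, since precomposing with a surjective isometry does not change the operator norm. By Proposition \ref{T:Multipliers}, $\|M_{1/k_a}\| = \|1/k_a\|_\infty$. So the remaining task is the elementary estimate
\[
\|1/k_a\|_\infty = \sup_{z\in\B}\left|\frac{(1-\langle z,a\rangle)^2}{1-|a|^2}\right|^{\frac{n+1}{2}} = \left(\frac{1}{1-|a|^2}\right)^{\frac{n+1}{2}}\sup_{z\in\B}|1-\langle z,a\rangle|^{n+1}.
\]
By Cauchy--Schwarz, $|\langle z,a\rangle|\le |z|\,|a| < |a|$ for $z\in\B$, so $|1-\langle z,a\rangle| \le 1+|a|$, with the bound approached as $z\to -a/|a|$. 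Hence $\sup_{z\in\B}|1-\langle z,a\rangle|^{n+1} = (1+|a|)^{n+1}$, and
\[
\|1/k_a\|_\infty = \left(\frac{(1+|a|)^2}{1-|a|^2}\right)^{\frac{n+1}{2}} = \left(\frac{1+|a|}{1-|a|}\right)^{\frac{n+1}{2}},
\]
which is the claimed value of $\|C_\Phi\|$.

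There is no serious obstacle here; the only points requiring a little care are (i) checking that precomposition by the surjective isometry $W_\Phi$ leaves $\|\cdot\|$ for the operator $M_{1/k_a}$ unchanged — this uses surjectivity of $W_\Phi$, which is part of Theorem \ref{T:isometricWCOs}, so that $\{M_{1/k_a}W_\Phi f : f\in\calN_p,\ \|f\|_p\le 1\} = \{M_{1/k_a}g : g\in\calN_p,\ \|g\|_p\le 1\}$; and (ii) confirming the supremum defining $\|1/k_a\|_\infty$ is actually attained (or approached) on $\B$, which the Cauchy--Schwarz argument above settles. One should also double-check the branch/normalization of the $(n+1)/2$-power in \eqref{probe} so that $|k_w(z)|^2 = \big((1-|w|^2)/|1-\langle z,w\rangle|^2\big)^{n+1}$ holds without ambiguity, but since only moduli enter the computation this is immediate.
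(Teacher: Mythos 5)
Your proposal is correct and follows essentially the same route as the paper: the factorization $C_\Phi=M_{1/k_a}\circ W_{\Phi}$, boundedness of $M_{1/k_a}$ via Proposition \ref{T:Multipliers}, the surjective isometry $W_\Phi$ giving $\|C_\Phi\|=\|M_{1/k_a}\|=\|1/k_a\|_\infty$. The only difference is that you spell out the elementary computation of $\|1/k_a\|_\infty=\bigl(\tfrac{1+|a|}{1-|a|}\bigr)^{\frac{n+1}{2}}$, which the paper simply states; your computation is correct.
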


\begin{proof}
Note that for any automorphism $\Phi$ on $\B$, we have $C_\Phi=M_{1/k_a} \circ W_{\Phi}$. Since $1/k_a$ is a bounded function, it is a multiplier of $\calN_p$ and since $W_{\Phi}$ is a surjective isometry, it follows that $\|C_\Phi\|=\|M_{1/k_a}\|$. The last equality in \eqref{normPsi} follows from the fact that $\|1/k_a\|_\infty = \Big(\frac{1+|a|}{1-|a|}\Big)^{\frac{n+1}{2}}$ and Theorem \ref{T:Multipliers}.
\end{proof}

\subsection{Upper estimate of $\|\cdot\|_p$ for small $p$}

It is immediate from the definition of the norm in $\Np$ that $\|f\|_{p}\geq \|f\|_{A^2_p}$ for all $p>0$. As the last result in this section, we provide an upper estimate for $\|f\|_{p}$ when $p\leq n$.
\begin{prop}
\label{L:normEstimateNp}
For $0<p\leq n$, there exists a positive constant $C=C(n,p)$ such that for any $f\in\Np$ we have
\begin{align*}
\|f\|_{p} \leq C\left(\int_{\B}\big(\sup_{|w|=|z|}|f(w)|^2\big)(1-|z|^2)^p\,dV(z)\right)^{1/2}.
\end{align*}
\end{prop}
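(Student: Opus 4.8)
The plan is to estimate the defining supremum by working with a single automorphism at a time and exploiting the rotational invariance of the weight $(1-|z|^2)^p$ that appears after the change of variables. Fix $f\in\Np$ and $a\in\B$; by the formula \eqref{Eqn:normNp} it suffices to bound $\|W_{\Phi}f\|_{A^2_p}$ uniformly in $\Phi$, where $a=\Phi^{-1}(0)$. Writing this out, we must control
$$
\int_{\B}|f(\Phi(w))|^2 (1-|w|^2)^p \Big(\frac{1-|a|^2}{|1-\langle w,a\rangle|^2}\Big)^{n+1}\,dV(w).
$$
I would first replace $|f(\Phi(w))|^2$ by its radial maximal majorant $M(f)(r):=\sup_{|\zeta|=r}|f(\zeta)|^2$ evaluated at $r=|\Phi(w)|$; since $|\Phi(w)|=|\Phi_a(w)|$, this reduces the problem to estimating
$$
\int_{\B} M(f)(|\Phi_a(w)|)\,(1-|w|^2)^p \Big(\frac{1-|a|^2}{|1-\langle w,a\rangle|^2}\Big)^{n+1}\,dV(w).
$$
Now I would change variables $z=\Phi_a(w)$ to pull this back to an integral over $z$, using $1-|\Phi_a(z)|^2=\dfrac{(1-|a|^2)(1-|z|^2)}{|1-\langle z,a\rangle|^2}$ and the Jacobian of $\Phi_a$, which together turn the expression into
$$
\int_{\B} M(f)(|z|)\,(1-|z|^2)^p\Big(\frac{1-|a|^2}{|1-\langle z,a\rangle|^2}\Big)^{n+1-p}\,dV(z),
$$
after collecting powers of $\dfrac{1-|a|^2}{|1-\langle z,a\rangle|^2}$.

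The key step is then to bound the $a$-dependent factor. Integrating in the spherical variable first, the inner integral $\int_{\SSS}\dfrac{d\sigma(\xi)}{|1-\langle r\xi,a\rangle|^{2(n+1-p)}}$ is a standard Forelli–Rudin type integral (see \cite[Theorem 1.12]{Zhu} or \cite[Chapter 1]{Rud}): its growth in $|a|$ is governed by the exponent $2(n+1-p)-n = n+2-2p$ compared with $n$. Precisely, when $n+2-2p\le n$, i.e. $p\ge 1$, this integral is bounded uniformly; and when $n+2-2p>n$, i.e. $p<1$, it grows like $(1-r^2|a|^2)^{-(n+2-2p)+n}=(1-r^2|a|^2)^{2(1-p)}$ up to a harmless logarithmic term in the borderline case. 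In either case, since $0<p\le n$ ensures $n+1-p\ge 1>0$ so the kernel is genuinely singular and the Forelli–Rudin dichotomy applies, one gets
$$
\Big(\frac{1-|a|^2}{|1-\langle z,a\rangle|^2}\Big)^{n+1-p}\ \text{integrated against}\ d\sigma \ \lesssim\ (1-|a|^2)^{n+1-p}\,(1-r^2|a|^2)^{-(n+2-2p)_+},
$$
and multiplying by $(1-|a|^2)$ from... rather, tracking the remaining power $(1-|a|^2)^{n+1-p}$, one checks it is dominated by a bounded quantity times $(1-r^2)^{-(n+2-2p)_+}$ uniformly in $a$, so the whole $a$-dependence disappears and what remains is exactly $C\int_{\B}M(f)(|z|)(1-|z|^2)^{p'}\,dV(z)$ with $p'=p$ once the extra factor is absorbed.

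The main obstacle I anticipate is organizing the bookkeeping of the exponents so that the residual power of $(1-|z|^2)$ comes out to be exactly $p$ and not something smaller, and handling the borderline exponent $p=1$ (where a logarithm appears in the Forelli–Rudin estimate) as well as making transparent why the hypothesis $p\le n$ — equivalently $n+1-p\ge 1$, keeping the kernel singular rather than summable — is what is actually needed; for $p>n$ the estimate is anyway trivial since $\calN_p=A^{-(n+1)/2}$ by Theorem \ref{basic}(b). Once the Forelli–Rudin input is quoted cleanly, the rest is a change of variables and collecting constants, with the constant $C=C(n,p)$ depending only on the dimension and $p$ through the Forelli–Rudin bound.
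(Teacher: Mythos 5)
Your overall strategy --- majorize $|f|$ by its radial supremum, reduce to a spherical Forelli--Rudin integral, and show that the resulting $a$-dependent factor is bounded uniformly when $p\le n$ --- is the same as the paper's, but the execution contains a genuine computational error that derails the rest. When you change variables $z=\Phi_a(w)$ in
$\int_{\B} \bigl(\sup_{|\zeta|=|\Phi_a(w)|}|f(\zeta)|^2\bigr)(1-|w|^2)^p\bigl(\tfrac{1-|a|^2}{|1-\langle w,a\rangle|^2}\bigr)^{n+1}\,dV(w)$,
the identities $1-|\Phi_a(z)|^2=\tfrac{(1-|a|^2)(1-|z|^2)}{|1-\langle z,a\rangle|^2}$, $1-\langle\Phi_a(z),a\rangle=\tfrac{1-|a|^2}{1-\langle z,a\rangle}$ and the real Jacobian $\bigl(\tfrac{1-|a|^2}{|1-\langle z,a\rangle|^2}\bigr)^{n+1}$ make the two $(n+1)$-powers cancel exactly (you are simply undoing the change of variables that produced \eqref{Eqn:normNp}), so what you obtain is
$\int_{\B}\bigl(\sup_{|w|=|z|}|f(w)|^2\bigr)(1-|z|^2)^p\bigl(\tfrac{1-|a|^2}{|1-\langle z,a\rangle|^2}\bigr)^{p}\,dV(z)$:
the exponent is $p$, not $n+1-p$. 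Equivalently, the whole detour through $W_\Phi$ is unnecessary; one can just bound $|f(z)|^2$ by $\sup_{|w|=|z|}|f(w)|^2$ in the defining integral, which is how the paper starts.

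Because of that, your Forelli--Rudin bookkeeping is carried out for the wrong kernel, and it is also mis-stated in itself: for $\int_{\SSS}|1-\langle r\xi,a\rangle|^{-(n+c)}\,d\sigma(\xi)$ the dichotomy is $c<0$ (bounded), $c=0$ (logarithmic), $c>0$ (growth like $(1-r^2|a|^2)^{-c}$); with your exponent $2(n+1-p)$ the boundedness threshold would be $p=(n+2)/2$, not $p=1$, and the growth exponent you wrote, $2(1-p)$, has the wrong sign. More importantly, the decisive step --- that after multiplying by the surviving power of $(1-|a|^2)$ the expression is bounded uniformly in $a$ --- is precisely where the hypothesis $p\le n$ enters, and you never actually carry it out: ``one checks it is dominated \dots once the extra factor is absorbed'' leaves an unabsorbed power of $(1-r^2)$ that would lower the weight below $(1-|z|^2)^p$, and your stated reason for needing $p\le n$ (keeping $n+1-p\ge 1$ so the kernel stays singular) is not the real one. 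With the correct exponent $p$, the spherical integral is $\simeq 1$, $\log\tfrac{1}{1-r^2|a|^2}$, or $(1-r^2|a|^2)^{n-2p}$ according as $0<p<n/2$, $p=n/2$, $n/2<p\le n$, and the point is that multiplying by $(1-|a|^2)^{p}$ yields at most $(1-|a|^2)^{n-p}\le 1$ (respectively a bounded logarithmic term), which is exactly where $p\le n$ is used; this is the paper's argument, and your write-up does not reach it.
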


\begin{proof}
For $a\in\B$, integration in polar coordinates (\cite[Lemma 1.8]{Zhu}) gives
\begin{align*}
&\int_{\B}|f(z)|^2(1-|\Phi_a(z)|^2)^{p}\,dV(z)\\
&\quad = \int_{\B}|f(z)|^2\dfrac{(1-|z|^2)^p(1-|a|^2)^p}{|1-\langle z,a\rangle|^{2p}}\,dV(z)\\
&\quad = 2n\int_{0}^{1}r^{2n-1}(1-|r|^2)^{p}\Big(\int_{\mathbb{S}}|f(r\zeta)|^2\dfrac{(1-|a|^2)^p}{|1-\langle\zeta,ra\rangle|^{2p}}d\sigma(\zeta)\Big)dr\\
&\quad \leq 2n\int_{0}^{1}r^{2n-1}(1-|r|^2)^{p}\big(\sup_{\zeta\in\mathbb{S}}|f(r\zeta)|^2\big)\Big(\int_{\mathbb{S}}\dfrac{(1-|a|^2)^p}{|1-\langle\zeta,ra\rangle|^{2p}}d\sigma(\zeta)\Big)dr.
\end{align*}
Now \cite[Theorem 1.12]{Zhu} with $a\in\B$ and $0<r<1$ gives
\begin{align*}
\int_{\mathbb S} \frac{d\sigma(\zeta)}{|1-\langle r\zeta, a\rangle|^{2p}}
& = \int_{\mathbb S} \frac{d\sigma(\zeta)}{|1-\langle \zeta, ra\rangle|^{2p}}
 = \int_{\mathbb S} \frac{d\sigma(\zeta)}{|1-\langle \zeta, ra\rangle|^{n+(2p-n)}}\\
&\simeq \begin{cases}
\textrm{bounded in} \ \B \ & \text{ for } 0<p<\frac{n}{2},\\
\log\frac{1}{1-r^2|a|^2} \le \log\frac{1}{1-|a|^2} \ & \text{ for } p=\frac{n}{2},\\
(1-r^2|a|^2)^{n-2p} \le (1-|a|^2)^{n-2p} \ & \text{ for } \frac{n}{2}<p\leq n.
\end{cases}
\end{align*}
Thus, for all cases of $0<p\leq n$, there exists a positive constant $C$ independent of $a$ and $r$ such that
$$
\int_{\mathbb S} \frac{(1-|a|^2)^{p}}{|1-\langle r\zeta, a\rangle|^{2p}}d\sigma(\zeta) \le C.
$$
It then follows that
\begin{align*}
&\int_{\B}|f(z)|^2(1-|\Phi_a(z)|^2)^{p}\,dV(z)\\
&\quad\quad\leq C\Big(2n\int_{0}^{1}r^{2n-1}(1-|r|^2)^{p}\sup_{|w|=r}|f(w)|^2\,dr\Big)\\
&\quad\quad = C\int_{\B}\big(\sup_{|w|=|z|}|f(w)|^2\big)(1-|z|^2)^p\,dV(z).
\end{align*}
Taking supremum over $a\in\B$ gives the required inequality.
\end{proof}


\section{The closure of all polynomials in $\calN_p$}


It is natural to consider what the closure of all the polynomials is in $\calN_p$-spaces. We introduce the little space $\calN_p^0$ of $\calN_p$, which is defined as
$$
\calN_p^0=\calN_p^0(\B)=\left\{ f \in \calN_p: \lim_{|a| \to 1^{-}} \int_\B |f(z)|^2(1-|\Phi_a(z)|^2)^pdV(z)=0\right\}.
$$
In this section, we show that the closure of all the polynomials on $\B$ coincides with the little space $\calN_p^0$.

\begin{prop}
$\calN_p^0$ is a closed subspace of $\calN_p$, and hence it is a Banach space.
\end{prop}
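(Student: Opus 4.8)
The plan is to show that $\calN_p^0$ is closed in the norm of $\calN_p$; being a closed subspace of the Banach space $\calN_p$ (Theorem \ref{basic}(c)), it is then itself a Banach space. So suppose $(f_k)$ is a sequence in $\calN_p^0$ converging in $\|\cdot\|_p$ to some $f\in\calN_p$; I must verify that $f$ satisfies the vanishing condition defining $\calN_p^0$. For $a\in\B$ write $I_a(g) = \big(\int_\B |g(z)|^2(1-|\Phi_a(z)|^2)^p\,dV(z)\big)^{1/2}$, so that $\|g\|_p = \sup_{a\in\B} I_a(g)$ and the condition ``$g\in\calN_p^0$'' reads $\lim_{|a|\to 1^-} I_a(g) = 0$.

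The key point is that each $I_a$ is itself a seminorm (in fact it is just the $L^2$ norm against the finite measure $(1-|\Phi_a(z)|^2)^p\,dV(z)$), so it obeys the triangle inequality, and moreover $I_a(g)\le \|g\|_p$ uniformly in $a$. Hence, for any index $k$ and any $a\in\B$,
\begin{align*}
I_a(f) \le I_a(f - f_k) + I_a(f_k) \le \|f - f_k\|_p + I_a(f_k).
\end{align*}
Now fix $\eps>0$. Choose $k$ so large that $\|f - f_k\|_p < \eps/2$. Since $f_k\in\calN_p^0$, there is $\del\in(0,1)$ such that $I_a(f_k) < \eps/2$ whenever $|a| > 1-\del$. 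For such $a$ we then get $I_a(f) < \eps$. As $\eps>0$ was arbitrary, $\lim_{|a|\to 1^-} I_a(f) = 0$, so $f\in\calN_p^0$. This shows $\calN_p^0$ is closed.

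It remains only to note that $\calN_p^0$ is also a linear subspace: it plainly contains $0$, and if $f, g\in\calN_p^0$ and $c\in\C$, then from $I_a(cf + g) \le |c|\,I_a(f) + I_a(g)$ and the fact that both terms on the right tend to $0$ as $|a|\to 1^-$, we get $cf + g\in\calN_p^0$. Thus $\calN_p^0$ is a closed subspace of the Banach space $\calN_p$, hence a Banach space in its own right. There is no real obstacle here: the only thing to be slightly careful about is the interchange of the order of the two limiting processes (the limit in $k$ and the limit $|a|\to 1^-$), which is exactly what the uniform bound $I_a(\cdot)\le\|\cdot\|_p$ is designed to handle — it is the standard ``$\eps/2$ argument'' showing that a uniform limit of functions with a common vanishing-at-the-boundary property again has that property.
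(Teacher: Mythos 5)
Your proof is correct and follows essentially the same route as the paper: both are the standard $\eps/2$ argument, bounding $I_a(f)$ via $I_a(f-f_k)+I_a(f_k)\le\|f-f_k\|_p+I_a(f_k)$ (the paper uses the equivalent pointwise bound $|f|^2\le 2|f-f_{n_0}|^2+2|f_{n_0}|^2$ instead of the triangle inequality for the seminorm $I_a$). No issues.
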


\begin{proof}
It can be easily shown that $\calN^0_p$ is a subspace of $\calN_p$ and hence it suffices to show that $\calN_p^0$ is closed.

Consider a sequence $\{f_n\} \subset \calN_p^0$ that converges to some $f \in \calN_p$. We show that $f \in \calN_p^0$. Indeed, for any $\varepsilon>0$, there exists an $N \in \N$, such that  $\|f-f_n\|_p<\sqrt{\frac{\varepsilon}{4}},\ \forall n>N$. Let $n_0>N$ be fixed. Since $f_{n_0} \in \calN^0_p$, there exists a $\del \in (0,1)$, such that
$$
\sup_{\del<|a|<1} \int_{\B} |f_{n_0}(z)|^2(1-|\Phi_a(z)|^2)^pdV(z)<\frac{\varepsilon}{4}.
$$
As a consequence,
\begin{eqnarray*}
&&\sup_{\del<|a|<1} \int_{\B} |f(z)|^2(1-|\Phi_a(z)|^2)^pdV(z)\\
&&\le\sup_{\del<|a|<1} \int_{\B} 2\left(|f(z)-f_{n_0}(z)|^2+|f_{n_0}(z)|^2\right)(1-|\Phi_a(z)|^2)^pdV(z)\\
&&\le2\|f-f_{n_0}\|^2_p+2\sup_{\del<|a|<1} \int_{\B} |f_{n_0}|^2(1-|\Phi_a(z)|^2)^pdV(z)<\varepsilon,
\end{eqnarray*}
which implies
$$
\lim_{|a| \to 1^{-}} \int_{\B} |f(z)|^2(1-|\Phi_a(z)|^2)^pdV(z)=0.
$$
From this, we conclude that $\calN_p^0$ is closed.
\end{proof}

\begin{lem}
\label{L:subspaceLittleNp}
For any $p>0$, we have $A^2\subset\Np^{0}$.
\end{lem}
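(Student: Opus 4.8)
The plan is to show that every $f\in A^2$ actually lies in $\Np^0$, by exploiting the fact that the finite measures $d\mu_a(z) = (1-|\Phi_a(z)|^2)^p\,dV(z)$ can be written as $|k_a(z)|^2(1-|z|^2)^p\,dV(z)$, and that the ``bad part'' of the integral is controlled uniformly once $|a|$ is large. First I would recall from Theorem \ref{basic}(a) that $H^\infty\hookrightarrow\Np$, and note that the $A^2$-norm dominates the $A^2_p$-norm (since $p>0$ forces $(1-|z|^2)^p\le 1$), so the inclusion $A^2\subset\Np$ is plausible; but since $A^2$ is not contained in $H^\infty$ we must work directly. The key computation is the identity derived above: for $a=\Phi_a^{-1}(0)$,
\begin{align*}
\int_{\B}|f(z)|^2(1-|\Phi_a(z)|^2)^p\,dV(z) = \int_{\B}|f(\Phi_a(w))|^2|k_a(w)|^2(1-|w|^2)^p\,dV(w),
\end{align*}
which recasts the quantity defining $\Np^0$ as $\|W_{\Phi_a}f\|^2_{A^2_p}$.

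The main step is then to show $\|W_{\Phi_a}f\|_{A^2_p}\to 0$ as $|a|\to 1^-$ for $f\in A^2$. I would first establish this for a dense subclass of $A^2$ for which the estimate is transparent — say polynomials, or more safely the normalized Bergman kernels $k_b$ themselves (or finite linear combinations), since for such functions $f\circ\Phi_a$ can be computed explicitly and the resulting integral estimated using the standard integral estimates of \cite[Theorem 1.12]{Zhu} (exactly as in the proof of Proposition \ref{L:normEstimateNp}). Concretely, for $f$ a polynomial one has $|f(\Phi_a(w))|^2 \lesssim (1-|a|^2)^{-M}|1-\langle w,a\rangle|^{-M}$ for a suitable $M$ depending on $\deg f$, and then $\int_{\B}|1-\langle w,a\rangle|^{-M}(1-|w|^2)^p\,|k_a(w)|^2\,dV(w)$ is handled by the same case analysis in $p$; one checks that the power of $(1-|a|^2)$ that comes out is strictly positive, giving decay. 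Then for general $f\in A^2$ one approximates: given $\eps>0$ pick a polynomial $g$ with $\|f-g\|_{A^2}<\eps$; since $\|W_{\Phi_a}(f-g)\|_{A^2_p}\le\|W_{\Phi_a}(f-g)\|_{A^2}$ and $W_{\Phi_a}$ acts on $A^2$ — and more usefully, since the change-of-variables identity shows $\|W_{\Phi_a}h\|_{A^2_p}^2 = \int_{\B}|h|^2\,d\mu_a \le \mu_a(\B)^{1-p/?}\cdots$ — one wants a uniform-in-$a$ bound $\|W_{\Phi_a}h\|_{A^2_p}\lesssim \|h\|_{A^2}$ for all $h$; this follows because $\int_{\B}|h(z)|^2(1-|\Phi_a(z)|^2)^p\,dV(z)\le \sup_a\|h\|_p^2$ is not quite what we need, so instead I would use that $(1-|\Phi_a(z)|^2)^p$ is bounded and argue with the unitarity of $f\mapsto (f\circ\Phi_a)\,k_a$ on $A^2$ together with $(1-|w|^2)^p\le 1$ to get $\|W_{\Phi_a}h\|_{A^2_p}\le\|W_{\Phi_a}h\|_{A^2}=\|h\|_{A^2}$.

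Putting these together: $\limsup_{|a|\to 1^-}\|W_{\Phi_a}f\|_{A^2_p}\le \limsup_{|a|\to1^-}\|W_{\Phi_a}g\|_{A^2_p} + \sup_a\|W_{\Phi_a}(f-g)\|_{A^2_p} = 0 + \|f-g\|_{A^2} < \eps$, and letting $\eps\to0$ gives the claim, hence $f\in\Np^0$. The main obstacle I anticipate is the uniform-in-$a$ boundedness of $W_{\Phi_a}$ as an operator from $A^2$ into $A^2_p$ (equivalently, that $A^2\hookrightarrow\Np$ with control of the norm) — one must be careful that the weight $|k_a(w)|^2$ blows up near the boundary, but this is precisely compensated by the Jacobian in the change of variables, and the extra factor $(1-|w|^2)^p$ only helps; the cleanest route is to recognize $h\mapsto (h\circ\Phi_a)k_a$ as a unitary on $A^2$ and then lose nothing by inserting $(1-|w|^2)^p\le 1$. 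A secondary technical point is verifying that the power of $(1-|a|^2)$ produced for polynomial $f$ is genuinely positive in the borderline case $p=n/2$ (where a logarithmic factor appears) and $n/2<p\le n$; the inequalities already recorded in the proof of Proposition \ref{L:normEstimateNp} show the surface integral is $\lesssim (1-|a|^2)^{n-2p}$ or $\lesssim\log\frac{1}{1-|a|^2}$, and one checks the remaining $(1-|a|^2)$-powers from $|f\circ\Phi_a|^2$ and $|k_a|^2$ still outweigh these, so decay persists.
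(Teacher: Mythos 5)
Your overall scheme---prove the decay $\|W_{\Phi_a}f\|_{A^2_p}\to 0$ on a dense subclass of $A^2$ and then pass to general $f$ via the uniform bound $\|W_{\Phi_a}h\|_{A^2_p}\le\|h\|_{A^2}$---is viable, and that uniform bound is correct: it is nothing but the inequality $(1-|\Phi_a(z)|^2)^p\le 1$, since $\|W_{\Phi_a}h\|_{A^2_p}^2=\int_{\B}|h(z)|^2(1-|\Phi_a(z)|^2)^p\,dV(z)\le\|h\|_{A^2}^2$, so no appeal to unitarity is even needed. The genuine gap is in your concrete estimate for the dense class. From $|f(\Phi_a(w))|^2\lesssim(1-|a|^2)^{-M}|1-\langle w,a\rangle|^{-M}$ and $|k_a(w)|^2=(1-|a|^2)^{n+1}|1-\langle w,a\rangle|^{-2(n+1)}$, the volume estimate of \cite[Theorem 1.12]{Zhu} gives $\int_{\B}(1-|w|^2)^p|1-\langle w,a\rangle|^{-(M+2n+2)}\,dV(w)\simeq(1-|a|^2)^{p-M-n-1}$ once $M>p-n-1$, so the net power of $(1-|a|^2)$ is $p-2M$, which is \emph{negative} as soon as $M>p/2$; the claimed ``one checks the power is strictly positive'' does not go through. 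The repair is much simpler than the bound you attempted: a polynomial (indeed any bounded holomorphic function) satisfies $|f(\Phi_a(w))|\le\sup_{\overline{\B}}|f|$ because $\Phi_a$ maps $\B$ into $\B$, so the quantity to control is just $\int_{\B}(1-|\Phi_a(z)|^2)^p\,dV(z)=(1-|a|^2)^p\int_{\B}(1-|z|^2)^p|1-\langle z,a\rangle|^{-2p}\,dV(z)\lesssim(1-|a|^2)^{\min\{p,\,n+1\}}$ (with a logarithmic factor when $p=n+1$), which tends to $0$ as $|a|\to1^-$. With that step fixed, your approximation argument does close the proof.

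You should also compare with the paper's argument, which settles the lemma in a few lines with no density, no change of variables and no kernel estimates: for $f\in A^2$ set $g_a(z)=|f(z)|^2(1-|\Phi_a(z)|^2)^p$; then $0\le g_a\le|f|^2\in L^1(\B,dV)$, and for each fixed $z$ one has $1-|\Phi_a(z)|^2=\frac{(1-|a|^2)(1-|z|^2)}{|1-\langle z,a\rangle|^2}\to 0$ as $|a|\to1^-$, because the denominator is bounded below by $(1-|z|)^2$. The Dominated Convergence Theorem then yields $\int_{\B}g_a\,dV\to0$, i.e.\ $f\in\Np^0$, for every $f\in A^2$ simultaneously. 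Note that your final domination step already contains the key inequality $(1-|\Phi_a(z)|^2)^p\le1$; applying dominated convergence directly to $g_a$ makes the dense-subclass machinery unnecessary.
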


\begin{proof}
Let $f$ be an element in $A^2$. For $a\in\B$, define $$g_a(z) = |f(z)|^2(1-|\Phi_a(z)|^2)^{p} = |f(z)|^2\dfrac{(1-|a|^2)^p(1-|z|^2)^p}{|1-\langle z,a\rangle|^{2p}},\quad z\in\B.$$
We have $0\leq g_a(z)\leq |f(z)|^2$ and $\lim_{|a|\to 1^{-}}|g_a(z)|=0$ for all $z\in\B$. The Dominated Convergence Theorem then implies
\begin{align*}
\lim_{|a|\to 1^{-}}\int_{\B}|f(z)|^2(1-|\Phi_a(z)|^2)^p\;dV(z) = \lim_{|a|\to 1^{-}}\int_{\B}g_a(z)\;dV(z) = 0.
\end{align*}
This shows that $f$ belongs to $\Np^{0}$.
\end{proof}

\begin{thm} \label{criLNp}
Suppose $f \in \calN_p$. Then $f \in \calN^0_p$ if and only if
$$
\|f_r-f\|_p \to 0 \, \text{ as }\, r \to 1^{-},
$$
where $f_r(z)=f(rz)$ for all $z \in \B$.
\end{thm}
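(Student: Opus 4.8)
The plan is to prove both implications using the reproducing-type properties already established, together with the little-space criterion and a density argument. For the ``if'' direction, suppose $\|f_r - f\|_p \to 0$ as $r \to 1^-$. Each dilate $f_r$ extends holomorphically past $\overline{\B}$, hence $f_r \in H^\infty \subset A^2$, and by Lemma~\ref{L:subspaceLittleNp} we have $f_r \in \Np^0$. Since $\Np^0$ is closed in $\Np$ (the preceding proposition) and $f_r \to f$ in the $\|\cdot\|_p$-norm, we conclude $f \in \Np^0$. This direction is short and essentially free once we observe the dilates land in $A^2$.

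For the ``only if'' direction, assume $f \in \Np^0$; we must show $\|f_r - f\|_p \to 0$. Write the norm using the automorphism formulation \eqref{Eqn:normNp}, namely $\|g\|_p = \sup_{\Phi \in \Aut(\B)} \|W_\Phi g\|_{A^2_p}$, so that
\begin{align*}
\|f_r - f\|_p^2 = \sup_{a \in \B} \int_{\B} |f_r(z) - f(z)|^2 (1 - |\Phi_a(z)|^2)^p \, dV(z).
\end{align*}
The strategy is to split the supremum over $a$ into the region $|a| \le \delta$ (a ``compact'' part) and $|a| > \delta$ (a ``tail'' part), choosing $\delta$ close to $1$ first. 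On the tail, one uses the triangle inequality $|f_r - f|^2 \le 2|f_r|^2 + 2|f|^2$ together with the $\Np^0$ condition on $f$: the term involving $|f|^2$ is small by definition of $\Np^0$, and the term involving $|f_r|^2$ should be controlled by a dilation-invariance estimate showing $\sup_{|a|>\delta}\int_\B |f_r(z)|^2(1-|\Phi_a(z)|^2)^p\,dV(z)$ is bounded by (a constant times) the corresponding quantity for $f$ — this follows from the change of variables $z \mapsto rz$ in the integral, comparing $(1-|\Phi_a(rz)|^2)$ with $(1-|\Phi_{a'}(z)|^2)$ for a suitable $a'$, or more cleanly from a known inequality $\|f_r\|_p \lesssim \|f\|_p$ uniform in $r$ (which the authors have presumably recorded, or which one derives from Proposition~\ref{L:normEstimateNp} when $p \le n$). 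On the compact part $|a| \le \delta$, the weights $(1-|\Phi_a(z)|^2)^p$ are uniformly dominated (for $z$ in a fixed compact set, after further truncating the $z$-integral using the $A^{-(n+1)/2}$ embedding to bound the mass near $\SSS$), and there $f_r \to f$ uniformly on compacta, so this contribution tends to $0$ as $r \to 1^-$ for each fixed $\delta$.

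The main obstacle is the uniformity in $a$: the convergence $f_r \to f$ is only locally uniform, while the supremum over $a$ probes the behavior of $f$ arbitrarily close to the boundary, so a naive estimate fails. The resolution is exactly the two-region split above — the $\Np^0$ hypothesis is what makes the near-boundary contribution uniformly small independent of $r$, and the boundedness $\|f_r\|_p \lesssim \|f\|_p$ handles the dilated function's tail. A secondary technical point is justifying the bound $\sup_r \|f_r\|_p \le C\|f\|_p$; if it is not available from earlier in the paper, one can prove it by noting $f_r = f \circ (rz)$, writing $r\,\mathrm{id}$ as a limit of automorphisms is not possible, so instead one estimates directly via the substitution $w = rz$ and the pointwise comparison of Bergman-type weights, or invokes Proposition~\ref{L:normEstimateNp} together with the elementary fact that $\sup_{|w|=s}|f_r(w)| = \sup_{|w|=rs}|f(w)|$. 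Once these pieces are in place, letting first $r \to 1^-$ with $\delta$ fixed, then $\delta \to 1^-$, yields $\|f_r - f\|_p \to 0$.
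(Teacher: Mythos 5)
Your sufficiency direction is exactly the paper's argument: each dilate $f_r$ is bounded, hence lies in $A^2\subset\calN_p^0$ by Lemma~\ref{L:subspaceLittleNp}, and closedness of $\calN_p^0$ finishes it. Your necessity direction also follows the same architecture as the paper (split the supremum over $a$ at $|a|=\delta$; on the outer region use $|f_r-f|^2\le 2|f_r|^2+2|f|^2$ and the $\calN_p^0$ hypothesis; the ``dilation-invariance estimate'' you gesture at is precisely the paper's Schwarz--Pick step $|\Phi_{ra}(rz)|\le|\Phi_a(z)|$ followed by the substitution $w=rz$, with $a'=ra$). However, two of your specific steps would fail as written.

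First, on the outer region the alternative you call cleaner, namely the uniform bound $\|f_r\|_p\lesssim\|f\|_p$, cannot do the job: it yields only boundedness by $\|f\|_p^2$, whereas you need the outer contribution of $|f_r|^2$ to be \emph{small}. The only viable route is the tail-to-tail comparison, and for that you must also verify that the comparison point $a'=ra$ still lies in the region where the $\calN_p^0$ smallness applies; this forces $r$ to be close to $1$ depending on $\delta$ (the paper fixes $\delta_0\in(\delta,1)$ and requires $r>\delta/\delta_0$ so that $|ra|>\delta$), a point your sketch does not address. Second, and more seriously, your treatment of the inner region $|a|\le\delta$ breaks down for the substantive range $0<p\le n$: bounding the mass near $\SSS$ via the $A^{-(n+1)/2}$ embedding produces, after multiplying by the dominated weight $(1-|z|^2)^p/(1-\delta)^{2p}$, the majorant $(1-|z|^2)^{p-n-1}$, which is not integrable near the sphere unless $p>n$, so the proposed truncation gives nothing precisely where the theorem is interesting. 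The paper avoids this entirely: for $|a|\le\delta$ the weight is at most $(1-\delta)^{-2p}(1-|z|^2)^p$, and since $f\in A^2_p$ one invokes the standard fact that $f_r\to f$ in the $A^2_p$ norm (\cite[Proposition 2.6]{Zhu}). Your inner-region step could be repaired by using $f\in A^2_p$ together with a shell truncation and the same change-of-variables trick you use on the tail, but the $A^{-(n+1)/2}$ bound alone does not suffice.
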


\begin{proof}
$\bullet$ \textbf{Necessity}. Suppose $f \in \calN_p^0$. This implies that for any $\eps>0$, there exists $\del>0$ such that with $\del<|a|<1$, we have
\begin{equation} \label{little01}
\int_{\B} |f(z)|^2(1-|\Phi_a(z)|^2)^p\;dV(z)<\frac{\eps}{6\cdot 4^n}.
\end{equation}

Furthermore, by Schwarz-Pick Lemma (see, e.g., \cite[Theorem 8.1.4]{Rud}), we have
\begin{equation} \label{little02}
|\Phi_{ra}(rz)| \le |\Phi_a(z)|\ \hbox{for all}\ r\in(0,1)\ \hbox{and}\ a, z\in\B.
\end{equation}

Now take and fix $\del_0\in(\del,1)$. Consider $r$ satisfying $\max\left\{\frac{1}{2},\frac{\del}{\del_0}\right\}<r<1$. In this case, for all $a\in\B$ with $|a|\in(\del_0,1)$, by \eqref{little01} and \eqref{little02}, we have
\begin{align*}
&\int_{\B} |f(rz)|^2(1-|\Phi_a(z)|^2)^p\;dV(z)\\
&\qquad\qquad\qquad\leq \int_{\B} |f(rz)|^2(1-|\Phi_{ra}(rz)|^2)^p\;dV(z)\\
&\qquad\qquad\qquad = \left(\frac{1}{r}\right)^{2n}\int_{r\B} |f(w)|^2(1-|\Phi_{ra}(w)|^2)^p\;dV(w)\\
&\qquad\qquad\qquad \leq 4^n \int_{\B} |f(w)|^2(1-|\Phi_{ra}(w)|^2)^p\;dV(w)<\frac{\eps}{6}.
\end{align*}

On the other hand, since $f\in A^2_{p}$, by \cite[Proposition 2.6]{Zhu}, $f(rz)$ converges to $f(z)$ as $r\to1^-$, in the norm topology of the Bergman space $A^2_p(\B)$. This implies that there exists a $r_1\in(0,1)$ such that for $r_1<r<1$, we have
$$
\int_{\B} |f(rz)-f(z)|^2(1-|z|^2)^p\;dV(z)<\frac{(1-\del_0)^{2p} \cdot \eps}{3}.
$$
Consequently, for $|a|\le\del_0$ and $r_1<r<1$, we have
\begin{eqnarray*}
&&\sup_{|a|\le\del_0}\int_{\B} |f(rz)-f(z)|^2(1-|\Phi_a(z)|^2)^p\;dV(z)\\
&&= \sup_{|a|\le\del_0}\left\{ (1-|a|^2)^p \int_{\B} |f(rz)-f(z)|^2 \frac{(1-|z|^2)^p}{|1-\langle z,a \rangle|^{2p}}\;dV(z)\right\}\\
&&\leq \sup_{|a|\le\del_0}\int_{\B} |f(rz)-f(z)|^2 \frac{(1-|z|^2)^p}{|1-\langle z, a \rangle|^{2p}}\;dV(z) \\
&& \leq \frac{1}{(1-\del_0)^{2p}} \int_{\B} |f(rz)-f(z)|^2(1-|z|^2)^p\;dV(z)<\frac{\eps}{3}.
\end{eqnarray*}
For all $r$ with $\max\left\{\frac{1}{2},\frac{\del}{\del_0},r_1\right\}<r<1$, combing the above estimates yields
\begin{eqnarray*}
\|f_r-f\|_p^2%
&=&\sup_{a \in \B} \int_{\B} |f(rz)-f(z)|^2(1-|\Phi_a(z)|^2)^p\;dV(z)\\
&\leq& \left(\sup_{|a| \leq \del_0}+\sup_{\del_0<|a|<1}\right) \int_{\B} |f(rz)-f(z)|^2(1-|\Phi_a(z)|^2)^p\;dV(z)\\
&\leq& \frac{\eps}{3}+2\sup_{\del_0<|a|<1} \int_{\B}\left(|f(rz)|^2+|f(z)|^2\right)(1-|\Phi_a(z)|^2)^p\;dV(z)\\
&<& \frac{\eps}{3}+2\left(\frac{\eps}{6}+\frac{\eps}{6\cdot4^n}\right)<\frac{\eps}{3}+2\left(\frac{\eps}{6}+\frac{\eps}{6}\right)=\eps,
\end{eqnarray*}
which show that $\|f_r-f\|_p \to 0$ as $r \to 1^{-}$.

\medskip
$\bullet$ \textbf{Sufficiency}. Suppose that $\|f_r-f\|_p\to 0$, as $r\to 1^{-}$. For each $0<r<1$, the holomorphic function $f_r$ is bounded, hence it belongs to $\Np^{0}$ by Lemma \ref{L:subspaceLittleNp}. Since $\calN_p^0$ is a closed subspace of $\calN_p$, it follows that $f$ belongs to $\calN_p^0$.
\end{proof}

As a corollary to Theorem \ref{criLNp}, we obtain

\begin{cor}
The set of polynomials is dense in $\calN_p^0$.
\end{cor}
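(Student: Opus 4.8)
The plan is to deduce density of polynomials in $\calN_p^0$ from Theorem \ref{criLNp} in two stages: first show that every element of $\calN_p^0$ is approximated in $\|\cdot\|_p$ by dilates $f_r$, which are "nicer" functions, and then approximate each dilate by polynomials. By Theorem \ref{criLNp}, for $f\in\calN_p^0$ and any $\eps>0$ there is $r\in(0,1)$ with $\|f_r-f\|_p<\eps/2$. So it remains to show each $f_r$ can be approximated in $\calN_p$-norm by polynomials to within $\eps/2$, and since $\calN_p^0$ is closed, the conclusion follows.

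For the second stage, I would fix $r\in(0,1)$ and consider the Taylor expansion $f(rz)=\sum_{k\ge 0} r^k Q_k(z)$, where $Q_k$ is the $k$-th homogeneous polynomial term of $f$; since $f\in\calO(\B)$, this series converges uniformly on $\overline{\B}$ when we have the factor $r^k\to 0$ geometrically — more precisely, picking any $\rho$ with $r<\rho<1$, the coefficients satisfy a Cauchy-type bound forcing $\sup_{\overline\B}|r^k Q_k|\lesssim (r/\rho)^k\sup_{|z|=\rho}|f(z)|$, so the partial sums $S_N(z)=\sum_{k\le N} r^k Q_k(z)$ converge to $f_r$ uniformly on $\overline{\B}$, hence in $H^\infty$-norm. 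The key point is then the continuous embedding $H^\infty\hookrightarrow\calN_p$ from Theorem \ref{basic}(a): since $\|g\|_p\lesssim \|g\|_\infty$ for $g\in H^\infty$, uniform convergence $S_N\to f_r$ on $\overline{\B}$ gives $\|S_N-f_r\|_p\to 0$. Thus polynomials approximate $f_r$ in $\calN_p$-norm, completing the argument.

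Alternatively, and more slickly, one can skip the explicit Taylor analysis: $f_r$ extends holomorphically to a neighborhood of $\overline{\B}$ (namely to the ball of radius $1/r$), so by a standard fact its Taylor partial sums converge to it uniformly on $\overline{\B}$; feeding this into $H^\infty\hookrightarrow\calN_p$ finishes the job. Either way the structure is: $f\approx f_r$ in $\calN_p$ (Theorem \ref{criLNp}), $f_r\approx$ polynomial in $H^\infty$ (elementary), $H^\infty$-convergence implies $\calN_p$-convergence (Theorem \ref{basic}(a)); since $\calN_p^0$ is closed and contains all polynomials, the closure of the polynomials in $\calN_p^0$ is all of $\calN_p^0$.

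I do not anticipate a genuine obstacle here; the only point requiring a moment's care is justifying uniform convergence of Taylor partial sums of $f_r$ on the closed ball, which is immediate from the fact that $f_r$ is holomorphic on a strictly larger ball. Everything else is a direct invocation of the already-established Theorem \ref{criLNp} and the embedding in Theorem \ref{basic}(a), together with the previously proved closedness of $\calN_p^0$.
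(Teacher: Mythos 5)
Your proposal is correct and follows essentially the same route as the paper: approximate $f$ by $f_r$ via Theorem \ref{criLNp}, approximate $f_r$ uniformly on $\overline{\B}$ by its Taylor (homogeneous-expansion) partial sums, and pass to the $\calN_p$-norm via the embedding $H^\infty\hookrightarrow\calN_p$ of Theorem \ref{basic}. The only difference is that you spell out the uniform approximation of $f_r$ (which the paper states without detail), and that justification is sound.
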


\begin{proof}
By Theorem \ref{criLNp}, for any $f \in \calN_p^0$, we have
$$
\lim_{r\to1^{-}}\|f_r-f\|_p = 0.
$$
Since each $f_r$ can be uniformly approximated by polynomials, and moreover, by Theorem \ref{basic}, the sup-norm dominates the $\calN_p$-norm, we conclude that every $f \in \calN_p^0$ can be approximated in the $\calN_p$-norm by polynomials.
\end{proof}



\section{$\calN_p$-norm via Carleson measures}


Recall (see, e.g., \cite{Zhu}) that for $\xi\in\SSS$ and $r>0$, a Carleson tube at $\xi$ is defined as
$$
Q_r(\xi)=\{z\in\B: |1-\langle z,w\rangle|<r\}.
$$

A positive Borel measure $\mu$ in $\B$ is called a \emph{$p$-Carleson measure} if there exists a constant $C>0$ such that
$$
\mu(Q_r(\xi)) \le Cr^{p}
$$
for all $\xi\in\SSS$ and $r>0$. Moreover, if
$$
\lim_{r\to0} \frac{\mu(Q_r(\xi))}{r^p}=0
$$
uniformly for $\xi \in \SSS$, then $\mu$ is called a \emph{vanishing $p$-Carleson measure}.

\medskip
The following result describes a relationship between functions in $\calN_p$ as well as $\calN_p^0$ and Carleson measures.
\begin{prop}
Let $p>0$ and $f\in\calO(\B)$. Define $d\mu_{f,p}(z)=|f(z)|^2(1-|z|^2)^p\;dV(z)$. The following assertions hold.
\begin{enumerate}
\item $f\in\calN_p$ if and only if $\mu_{f,p}$ is a $p$-Carleson measure.
\item $f\in\calN_p^0$ if and only if $\mu_{f,p}$ is a vanishing $p$-Carleson measure.
\end{enumerate}
Moreover, it holds
\begin{equation} \label{eqnorm1}
\|f\|_p^2 \simeq \sup_{r \in (0, 1), \xi\in\SSS} \frac{\mu_{f, p}(Q_r(\xi))}{r^p} = \sup_{r\in(0,1),\xi\in\SSS} \frac{1}{r^p}\int_{Q_r(\xi)}|f(z)|^2(1-|z|^2)^p\;dV(z).
\end{equation}
\end{prop}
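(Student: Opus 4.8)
The plan is to establish the two characterizations via the norm equivalence \eqref{eqnorm1}, so the heart of the matter is proving the two-sided estimate
$$
\|f\|_p^2 \simeq \sup_{r\in(0,1),\,\xi\in\SSS} \frac{\mu_{f,p}(Q_r(\xi))}{r^p}.
$$
For the easy direction, I would fix $\xi\in\SSS$ and $r\in(0,1)$ and choose the point $a=(1-r)\xi\in\B$. Then on the Carleson tube $Q_r(\xi)$ one has $|1-\langle z,a\rangle|\lesssim r$, and since $1-|a|^2\simeq r$, the quantity $(1-|\Phi_a(z)|^2)^p = \bigl((1-|a|^2)(1-|z|^2)/|1-\langle z,a\rangle|^2\bigr)^p$ is bounded below by a constant multiple of $(1-|z|^2)^p r^{-p}$ on $Q_r(\xi)$. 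Restricting the integral defining $\|f\|_p^2$ to $Q_r(\xi)$ then gives $\|f\|_p^2 \gtrsim r^{-p}\int_{Q_r(\xi)}|f(z)|^2(1-|z|^2)^p\,dV(z)$; taking the supremum yields one inequality and shows "$f\in\calN_p\Rightarrow\mu_{f,p}$ is a $p$-Carleson measure."

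For the reverse (harder) direction, assume $\mu_{f,p}$ is a $p$-Carleson measure with constant $C$, and estimate $\int_\B |f(z)|^2(1-|\Phi_a(z)|^2)^p\,dV(z) = (1-|a|^2)^p\int_\B |1-\langle z,a\rangle|^{-2p}\,d\mu_{f,p}(z)$ uniformly in $a$. The standard tool here is to slice $\B$ by the sets where $|1-\langle z,a\rangle|$ is comparable to $2^{-k}(1-|a|^2)$ (dyadic annuli around the boundary point $a/|a|$): for $k$ with $2^{-k}\gtrsim 1-|a|^2$, the set $\{z: |1-\langle z,a\rangle|\le 2^{-k+1}\}$ is contained in a Carleson tube $Q_{c2^{-k}}(a/|a|)$, whose $\mu_{f,p}$-measure is $\lesssim C 2^{-kp}$, while on the $k$-th annulus $|1-\langle z,a\rangle|^{-2p}\simeq 2^{2kp}(1-|a|^2)^{-2p}$. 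Summing the geometric-type series $\sum_k 2^{2kp}(1-|a|^2)^{-2p}\cdot C 2^{-kp}(1-|a|^2)^{-p}$... wait, more carefully: the contribution of annulus $k$ is $(1-|a|^2)^p \cdot 2^{2kp}\cdot C 2^{-kp} = C(1-|a|^2)^p 2^{kp}$, and since $2^{-k}$ ranges down to about $1-|a|^2$, i.e. $2^k \lesssim (1-|a|^2)^{-1}$, the dominant term is $k$ maximal, giving $\lesssim C(1-|a|^2)^p(1-|a|^2)^{-p}=C$. Hence $\|f\|_p^2\lesssim C$, which is the constant in the $p$-Carleson condition, completing the norm equivalence and the characterization in part (1).

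For part (2), the same dyadic decomposition runs through with the vanishing hypothesis. Given $\eps>0$, pick $r_0$ so that $\mu_{f,p}(Q_r(\xi))\le \eps r^p$ for all $r<r_0$ and all $\xi$; split the sum over annuli into the "near-boundary" annuli (those with $2^{-k}(1-|a|^2)$, rather $2^{-k}<r_0$) which contribute $\lesssim\eps$ by the argument above, and the finitely many "bounded" annuli; for the latter, as $|a|\to 1^-$ the factor $(1-|a|^2)^p$ together with the global bound $\mu_{f,p}(\B)<\infty$ forces the contribution to $0$. This shows $f\in\calN_p\Rightarrow$ (vanishing Carleson $\Rightarrow$ $f\in\calN_p^0$); conversely, if $f\in\calN_p^0$, the easy-direction estimate localized to $Q_r(\xi)$ with $a=(1-r)\xi$ shows $r^{-p}\mu_{f,p}(Q_r(\xi))\lesssim \int_\B|f|^2(1-|\Phi_a|^2)^p\,dV \to 0$ as $r\to 0^+$ uniformly in $\xi$, since $|a|=1-r\to 1^-$. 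I expect the main obstacle to be the bookkeeping in the dyadic-annulus sum for the reverse direction — in particular keeping the estimates uniform in $a$ and correctly pairing the growth $|1-\langle z,a\rangle|^{-2p}\sim 2^{2kp}(1-|a|^2)^{-2p}$ with the Carleson bound on the right tube — and, in the vanishing case, cleanly separating the uniformly-small tail from the finitely many annuli that are controlled by $(1-|a|^2)^p\mu_{f,p}(\B)\to 0$. The sub-case analysis of \cite[Theorem 1.12]{Zhu} (whether $2p<n$, $=n$, or $>n$) does not actually intervene here because we only need the Carleson bound, not the precise integral asymptotics, which is a pleasant simplification.
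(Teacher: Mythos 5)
Your argument is essentially correct, but it follows a more self-contained route than the paper does for part (1). The paper simply rewrites $\|f\|_p^2=\sup_{a\in\B}\int_\B\bigl(\tfrac{1-|a|^2}{|1-\langle a,z\rangle|^2}\bigr)^p\,d\mu_{f,p}(z)$ and quotes the known Carleson-measure characterization \cite[Theorem 45]{ZZ} to get statement (1) and \eqref{eqnorm1}, reserving its own work for a ``little-oh'' version of that theorem (Lemma \ref{l-oh}), which it proves by exactly the kind of dyadic decomposition you describe: the test point $z=(1-r)\xi$ for the easy direction, and annuli $E_k=Q_{r_k}(\xi)\setminus Q_{r_{k-1}}(\xi)$ with $r_k=2^{k+1}(1-|z|)$ for the converse, splitting the sum at a threshold $N_0$. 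So your treatment of part (2) reconstructs the paper's lemma in substance (your variant for the finitely many large-scale annuli — using $(1-|a|^2)^p\mu_{f,p}(\B)\to 0$ rather than the big-oh Carleson bound against a geometric tail $\sum_{k>N_0}2^{-ks}$ — is equally valid and arguably cleaner), while your treatment of part (1) replaces an external citation by the same annulus computation, which is a reasonable trade: a longer but self-contained proof. Two points of care if you write this up: your description of the annuli is internally inconsistent at first (scales ``comparable to $2^{-k}(1-|a|^2)$'' versus absolute scales $2^{-k}$, and the line $|1-\langle z,a\rangle|^{-2p}\simeq 2^{2kp}(1-|a|^2)^{-2p}$ belongs to the former convention), though your corrected computation $(1-|a|^2)^p\cdot 2^{2kp}\cdot C2^{-kp}$ summed over $2^k\lesssim(1-|a|^2)^{-1}$ is the right one; and you should record the routine boundary cases — the containment $\{|1-\langle z,a\rangle|\le 2^{-k+1}\}\subset Q_{c2^{-k}}(a/|a|)$ needs $|a|$ bounded away from $0$ (small $|a|$ being trivial since $|1-\langle z,a\rangle|\ge 1-|a|$), and the finiteness of $\mu_{f,p}(\B)$ in the vanishing-Carleson-to-$\calN_p^0$ direction deserves the one-line justification (compact inner part plus finitely many tubes of radius $r_0$).
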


\begin{proof}
\noindent For any for $f\in\calO(\B)$, we can write
\begin{eqnarray*}
\|f\|_p^2%
&=& \sup_{a \in \B} \int_\B |f(z)|^2 \frac{(1-|a|^2)^p(1-|z|^2)^p}{|1-\langle a, z \rangle|^{2p}}\;dV(z)\\
&=& \sup_{a \in \B} \int_\B \left(\frac{1-|a|^2}{|1-\langle a, z \rangle|^2}\right)^pd\mu_{f, p}(z).
\end{eqnarray*}
Then statement (1) as well as \eqref{eqnorm1} follow from \cite[Theorem 45]{ZZ}.

\noindent On the other hand, statement (2) is a consequence of the ``little-oh version'' of \cite[Theorem 45]{ZZ}, which we provide a detailed proof below.
\end{proof}

\begin{lem}\label{l-oh}
Let $p=n+1+\alpha>0$ and $\mu$ be a finite positive Borel measure on $\B$. Then the following conditions are equivalent.
\begin{enumerate}
\item[(a)] $\mu$ is a vanishing $p$-Carleson measure.
\item[(b)] For each $s>0$,
\begin{equation} \label{eq2}
\lim_{|z| \to 1^{-}} \int_{\B} \frac{(1-|z|^2)^sd\mu(w)}{|1-\langle z, w \rangle|^{p+s}}=0.
\end{equation}
\item[(c)] For some $s>0$, \eqref{eq2} holds.
\end{enumerate}
\end{lem}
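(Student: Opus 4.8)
The plan is to prove the chain (a) $\Rightarrow$ (b) $\Rightarrow$ (c) $\Rightarrow$ (a), modelling the argument on the classical (``big-oh'') characterization of $p$-Carleson measures via the integral $\int_\B (1-|z|^2)^s |1-\langle z,w\rangle|^{-(p+s)}\,d\mu(w)$, but tracking the vanishing condition carefully. The implication (b) $\Rightarrow$ (c) is trivial. For (c) $\Rightarrow$ (a), I would fix $s>0$ for which \eqref{eq2} holds and estimate $\mu(Q_r(\xi))$ from above: for $\xi\in\SSS$ and $z = (1-r)\xi$ (so $1-|z|^2 \simeq r$), one has $|1-\langle z,w\rangle| \lesssim r$ for all $w \in Q_r(\xi)$, hence
\begin{align*}
\frac{\mu(Q_r(\xi))}{r^p} \lesssim \int_{Q_r(\xi)} \frac{(1-|z|^2)^s\,d\mu(w)}{|1-\langle z,w\rangle|^{p+s}} \le \int_{\B} \frac{(1-|z|^2)^s\,d\mu(w)}{|1-\langle z,w\rangle|^{p+s}},
\end{align*}
and the right-hand side tends to $0$ as $r\to 0^+$ (equivalently $|z|\to 1^-$), uniformly in $\xi$. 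This gives (a).

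The substantive direction is (a) $\Rightarrow$ (b). Here I would fix $s>0$, let $\eps>0$, and use the hypothesis to choose $\del\in(0,1)$ with $\mu(Q_t(\xi)) \le \eps\, t^p$ for all $\xi\in\SSS$ and all $t<\del$. Given $z\in\B$ with $|z|$ close to $1$, write $\xi = z/|z|$ and split $\B$ into the dyadic ``rings'' adapted to $z$: for $k\ge 0$ let $E_k = \{w\in\B : |1-\langle z,w\rangle| < 2^k(1-|z|^2)\}$, so that $E_k \subset Q_{2^{k+1}(1-|z|^2)}(\xi)$ up to a fixed constant, and the annuli $E_k\setminus E_{k-1}$ cover $\B$. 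On $E_k\setminus E_{k-1}$ the kernel $|1-\langle z,w\rangle|^{-(p+s)}$ is comparable to $(2^k(1-|z|^2))^{-(p+s)}$, so
\begin{align*}
\int_{\B} \frac{(1-|z|^2)^s\,d\mu(w)}{|1-\langle z,w\rangle|^{p+s}} \lesssim \sum_{k\ge 0} \frac{(1-|z|^2)^s}{(2^k(1-|z|^2))^{p+s}}\,\mu\big(Q_{C2^{k}(1-|z|^2)}(\xi)\big).
\end{align*}
For the finitely many indices $k$ with $C2^k(1-|z|^2) < \del$ we bound $\mu$ by $\eps\,(C2^k(1-|z|^2))^p$, and the resulting series $\sum_k \eps\, C^p\, 2^{-ks}$ converges to a constant multiple of $\eps$; for the remaining (boundedly many) large $k$, where $2^k(1-|z|^2) \ge \del/C$, we use the global bound $\mu(\B)<\infty$ together with the factor $(1-|z|^2)^s (2^k(1-|z|^2))^{-(p+s)} \le (1-|z|^2)^s (\del/C)^{-(p+s)}$, which tends to $0$ as $|z|\to 1^-$. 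Thus $\limsup_{|z|\to1^-}$ of the integral is $\lesssim \eps$, and since $\eps$ is arbitrary the limit is $0$, uniformly as required.

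The main obstacle, and the only place where care is genuinely needed, is the bookkeeping in that dyadic decomposition: one must verify that the measure of $E_k$ is controlled by the measure of a Carleson tube of comparable radius (this uses that the tube $Q_t(\xi)$ essentially contains $\{w : |1-\langle z,w\rangle|<t\}$ when $1-|z|^2 \lesssim t$), that the comparability $|1-\langle z,w\rangle| \simeq 2^k(1-|z|^2)$ on the annuli is valid, and that the ``tail'' in $k$ where $2^k(1-|z|^2)$ exceeds $\del$ really is handled by finiteness of $\mu$ rather than the Carleson condition --- the point being that for $s>0$ the geometric series $\sum 2^{-ks}$ converges, which is what forces the restriction $s>0$ in the statement. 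Everything else (the change of variables, the estimate $1-|z|^2 \simeq r$ for $z=(1-r)\xi$, measurability) is routine. I would also remark that this lemma is exactly the ingredient needed to complete statement (2) of the preceding proposition via \eqref{eq2} with the choice $s$ making $p+s$ match the reproducing-kernel exponent, so no separate argument for the proposition is needed.
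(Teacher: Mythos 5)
Your proposal is correct and follows essentially the same route as the paper: the trivial (b)$\Rightarrow$(c), the same test point $z=(1-r)\xi$ with $|1-\langle z,w\rangle|<2r$ on $Q_r(\xi)$ for (c)$\Rightarrow$(a), and the same dyadic annular decomposition about $\xi=z/|z|$ at scales $2^k(1-|z|^2)$, split at the threshold $\delta$, for (a)$\Rightarrow$(b). The only (harmless) difference is the tail estimate: you use $\mu(\B)<\infty$ plus the vanishing factor $(1-|z|^2)^s$ (and indeed only boundedly many of your large-$k$ annuli are nonempty, since $|1-\langle z,w\rangle|<2$, or one can just sum the convergent geometric series $\sum_k 2^{-k(p+s)}$), whereas the paper uses the big-oh Carleson bound for $k>N_0$ with $N_0$ pre-chosen so that $\sum_{k>N_0}2^{-ks}<\varepsilon$ --- both versions of the bookkeeping are valid.
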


\begin{proof}
$\bullet$ The implication $(b)\implies (c)$ is obvious.

$\bullet\ (c) \implies (a)$: Suppose the condition (c) holds. This means that there exists $s>0$, such that
$$
\lim_{|z| \to 1^{-}} \int_{\B} \frac{(1-|z|^2)^sd\mu(w)}{|1-\langle z, w \rangle|^{p+s}}=0.
$$
Then for any $\varepsilon>0$, there exsits $\del>0$, such that when $\del<|z|<1$,
\begin{equation} \label{eq7}
\int_{\B} \frac{(1-|z|^2)^sd\mu(w)}{|1-\langle z, w \rangle|^{p+s}}<\varepsilon.
\end{equation}

We first show that $\mu$ is a $p$-Carleson measure. Indeed, for $|z| \leq \del$, we have
\begin{align*}
\int_\B  \frac{(1-|z|^2)^sd\mu(w)}{|1-\langle z, w \rangle|^{p+s}}
& \leq \int_\B  \frac{d\mu(w)}{|1-\langle z, w \rangle|^{p+s}}
\leq \frac{\mu(\B)}{(1-\del)^{p+s}}<\infty.
\end{align*}
This fact and \eqref{eq7} show that $\mu$ satisfies condition (c) in \cite[Theorem 45]{ZZ}. Consequently, $\mu$ is a $p$-Carleson measure.

Next we prove that $\mu$ is a vanishing $p$-Carleson measure. Let $\xi$ be in $\SSS$. For $r\in (0,1-\delta)$, put $z=(1-r)\xi$. Then $\delta<|z|<1$ and for any $w\in\Q_r(\xi)$,
\begin{align*}
|1-\langle z,w\rangle| & = \Big|(1-r)\big(1-\langle \xi,w\rangle\big)+r\Big| \leq (1-r)r+r < 2r.
\end{align*}
Consequently,
\begin{align*}
\dfrac{(1-|z|^2)^s}{|1-\langle z,w\rangle|^{p+s}}\geq \dfrac{(1-|z|)^s}{|1-\langle z,w\rangle|^{p+s}} \geq \dfrac{r^s}{(2r)^{p+s}} = \dfrac{2^{-(p+s)}}{r^{p}}.
\end{align*}
Using \eqref{eq7}, we obtain
\begin{align*}
\frac{\mu(Q_r(\xi))}{r^{p}} & = \frac{1}{r^{p}} \int_{Q_r(\xi)} d\mu(w)
\leq 2^{p+s} \int_{Q_r(\xi)} \frac{(1-|z|^2)^sd\mu(w)}{|1-\langle z, w \rangle|^{p+s}}\\
 &\leq 2^{p+s} \int_{\B} \frac{(1-|z|^2)^sd\mu(w)}{|1-\langle z, w \rangle|^{p+s}}<2^{p+s} \varepsilon,
\end{align*}
which implies (a).

$\bullet \ (a)\implies (b)$: Suppose (a) holds, which means that for any $\varepsilon>0$, there exists $\del>0$ such that for $0<r<\del$, we have
\begin{equation} \label{eq5}
\frac{\mu(Q_r(\xi))}{r^{p}}<\varepsilon \quad\text{ for all}\ \xi \in \SSS.
\end{equation}
Also, since $\mu$ is a Carleson measure, there is a positive constant $C$ such that
\begin{align}
\label{Eqn:CarlesonMeasure}
\mu(Q_{r}(\xi)) \leq C r^{p}\quad\text{ for all } \xi\in\SSS \text{ and } 0<r<1.
\end{align}

Let $s>0$. For the same $\varepsilon$ chosen above, take $N_0 \in \N$ such that
\begin{equation} \label{eq4}
\sum_{k=N_0+1}^{\infty} \frac{1}{2^{sk}}<\varepsilon.
\end{equation}

Take and fix some $z \in \B$ with $\max\left\{\frac{3}{4}, 1-\frac{\del}{2^{N_0+1}}\right\}<|z|<1$ and set $\xi=z/|z|$. For any nonnegative integer $k$, let $r_k=2^{k+1}(1-|z|)$. We decompose the unit ball $\B$ into the disjoint union of the following sets:
$$
E_0=Q_{r_0}(\xi), \quad E_k=Q_{r_k}(\xi)\setminus Q_{r_{k-1}}(\xi), \quad  1 \leq k< \infty.
$$

For $k \geq 2$ and $w \in E_k$, we have
\begin{align}
\label{Eqn:w_in_Ek}
|1-\langle z, w\rangle|& = \big||z|(1-\langle \xi, w\rangle)+(1-|z|)\big|\notag\\
&\geq |z||1-\langle \xi, w\rangle|-(1-|z|)\notag\\
&\geq (3/4)2^k(1-|z|)-(1-|z|)\notag\\
&\geq 2^{k-1}(1-|z|).
\end{align}
This also holds for $k=1$ and $k=0$, because
$$
|1-\langle z, w \rangle| \geq 1-|z| \geq \frac{1}{2}(1-|z|).
$$
Now we consider two cases of $k \in \N$.

- Case I: $0 \leq k \leq N_0$. In this case, we have
$$
0<r_k=2^{k+1}(1-|z|) \leq 2^{N_0+1}(1-|z|)<\del.
$$
This implies, by \eqref{eq5}, that
\begin{equation} \label{eq6}
\mu(E_k) \leq  \mu(Q_{r_k}(\xi)) \leq r_k^p\,\varepsilon = 2^{p(k+1)}(1-|z|)^{p}\varepsilon.
\end{equation}

- Case II: $k>N_0$. Using \eqref{Eqn:CarlesonMeasure}, we have
\begin{equation} \label{eq9}
\mu(E_k) \leq \mu(Q_{r_k}(\xi)) \leq 2^{p(k+1)}(1-|z|)^{p}C.
\end{equation}

For $\max\left\{\frac{3}{4}, 1-\frac{\del}{2^{N_0+1}}\right\}<|z|<1$, using \eqref{eq4}, \eqref{Eqn:w_in_Ek}, \eqref{eq6} and \eqref{eq9}, we compute
\begin{align*}
& \int_{\B} \frac{(1-|z|^2)^sd\mu(w)}{|1-\langle z, w \rangle|^{p+s}} = \sum_{k=0}^{\infty} \int_{E_k} \frac{(1-|z|^2)^sd\mu(w)}{|1-\langle z, w \rangle|^{p+s}}\\
& \leq \sum_{k=0}^{\infty} \frac{(1-|z|^2)^s\mu(E_k)}{(2^{k-1}(1-|z|))^{p+s}} = \left(\sum_{k=0}^{N_0}+\sum_{k=N_0+1}^{\infty}\right)\frac{(1-|z|^2)^s\mu(E_k)}{(2^{k-1}(1-|z|))^{p+s}}\\
& \leq \sum_{k=0}^{N_0} \frac{2^{s+p(k+1)}(1-|z|)^{p+s}\varepsilon}{2^{(k-1)(p+s)}(1-|z|)^{p+s}}+ \sum_{k=N_0}^\infty \frac{2^{s+p(k+1)}(1-|z|)^{p+s}C}{2^{(k-1)(p+s)}(1-|z|)^{p+s}}\\
& = \varepsilon\sum_{k=0}^{N_0} \frac{2^{s+p(k+1)}}{2^{(k-1)(p+s)}}+ C\sum_{k=N_0+1}^\infty \frac{2^{s+p(k+1)}}{2^{(k-1)(p+s)}}\\
& = \varepsilon\cdot 4^{s+p}\sum_{k=0}^{N_0} \frac{1}{2^{ks}} + C\cdot 4^{p+s}\sum_{k=N_0+1}^{\infty}\frac{1}{2^{ks}}\\
& \leq \frac{4^{s+p}}{1-2^{-s}}\varepsilon + 4^{p+s}C\varepsilon = M\varepsilon,
\end{align*}
where $M$ is a constant depending only on $s$ and $p$. This shows that statement (b) holds.
\end{proof}


\section{Differences of $\calN_p$ for small values of $p$}


In this section, we show that for $p$ small, that is $0<p\le n$, all $\calN_p$-spaces are different. This together with Theorem \ref{basic} (b) gives a complete relationship between $\calN_p$-spaces for all $p>0$.

We prove this fact by a construction. In \cite{RW}, the authors constructed a sequence of homogeneous polynomials $(P_k)_{k \in \N}$ satisfying $\deg(P_k)=k$,
\begin{equation}
\label{RWprobe}
\|P_k\|_\infty=\sup_{\xi \in \SSS} |P_k(\xi)|=1, \ \textrm{and} \left( \int_\SSS |P_k(\xi)|^2d\sigma(\xi) \right)^{1/2} \ge \frac{\sqrt{\pi}}{2^n}.
\end{equation}
Note that the homogeneity of $P_k$ implies that $|P_k(z)|\leq |z|^k$ for all $z\in\B$.

Let $\{m_{k}\}_{k=0}^{\infty}$ be a sequence of positive integers such that $m_{k+1}/m_{k}\geq c$ for all $k\geq 0$, where $c>1$ is a constant. Let
\begin{align}
\label{Eqn:HadamardGap}
f(z) = \sum_{k=0}^{\infty}b_kP_{m_k}(z)\ \text{ for } z\in\B.
\end{align}
Such a function is said to belong to the \emph{Hadamard gap class}. A characterization for a Hadamard gap class function to be in a weighted Bergman space was given in \cite{SS}. In the following result, we obtain an estimate for the $\calN_p$-norm and $A^{-q}$-norm of $f$. These results are higher dimensional versions of \cite[Theorem 3.3]{Pal}.

\begin{thm}
\label{T:HadamardGapClass}
Let $f$ be defined as in \eqref{Eqn:HadamardGap}. Let $p$ be a positive real number. Then the following statements hold:
\begin{itemize}
\item[(a)] For $0<p\leq n$, we have $\|f\|_{p}^2\simeq\sum_{k=0}^{\infty}\frac{|b_k|^2}{m_k^{p+1}}$.
\item[(b)] For any $q>0$, we have $|f|_q\simeq\sup_{k}\frac{|b_k|}{m_k^{q}}$.
\end{itemize}
(Here, $\|f\|_p$ and $|f|_q$ denote the norm of $f$ in the spaces $\calN_p$ and $A^{-q}$, respectively).
\end{thm}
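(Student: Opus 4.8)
The plan is to compute both norms essentially by reducing to integrals over spheres and exploiting the near-orthogonality of the homogeneous polynomials $P_{m_k}$ under the Hadamard gap condition. For part (b), which is the easier one, I would first get the upper bound: since $|P_{m_k}(z)| \le |z|^{m_k}$, for any $z\in\B$ with $|z|=t$ we have $|f(z)|(1-t^2)^q \le \sum_k |b_k| t^{m_k}(1-t^2)^q \le \big(\sup_k \tfrac{|b_k|}{m_k^q}\big)\sum_k m_k^q t^{m_k}(1-t^2)^q$, and the gap condition makes $\sum_k m_k^q t^{m_k} \lesssim (1-t)^{-q}$ uniformly in $t$ (a standard estimate: group the exponents $m_k$ into dyadic blocks and compare with $\sum_{j\ge 0}(2^j)^q t^{2^j}$, which is $\simeq (1-t)^{-q}$). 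For the lower bound, fix $k$ and choose $\xi\in\SSS$ with $|P_{m_k}(\xi)|=1$; evaluating at $z=r\xi$ and using homogeneity gives $|f(r\xi)| \ge |b_k| r^{m_k} - \sum_{j\ne k}|b_j| r^{m_j}$, and choosing $r = r_k$ with $1-r_k \simeq 1/m_k$ (so $r_k^{m_k}$ stays bounded below) while controlling the tail again via the gap condition yields $|f|_q \gtrsim |b_k|/m_k^q$; taking the supremum over $k$ finishes (b).

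For part (a), the key identity is the Carleson-measure reformulation in \eqref{eqnorm1} together with the polar-coordinate / sphere-average machinery. Actually a cleaner route: by Proposition~\ref{L:normEstimateNp} (available since $0<p\le n$) combined with the trivial lower bound $\|f\|_p \ge \|f\|_{A^2_p}$, it suffices to show that both $\|f\|_{A^2_p}^2$ and $\int_\B \big(\sup_{|w|=|z|}|f(w)|^2\big)(1-|z|^2)^p\,dV(z)$ are comparable to $\sum_k |b_k|^2/m_k^{p+1}$; then these are squeezed together. Writing either quantity in polar coordinates reduces matters to estimating $\int_0^1 t^{2n-1}(1-t^2)^p \Psi(t)\,dt$ where $\Psi(t)$ is, respectively, $\int_\SSS |f(t\zeta)|^2\,d\sigma(\zeta) = \sum_k |b_k|^2 t^{2m_k}\|P_{m_k}\|_{L^2(\SSS)}^2$ (using that distinct homogeneous polynomials of different degrees are $L^2(\SSS)$-orthogonal) or $\sup_{|w|=t}|f(w)|^2$. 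For the $L^2$ quantity, $\|P_{m_k}\|_{L^2(\SSS)}^2 \simeq 1$ by \eqref{RWprobe}, so $\Psi(t)\simeq \sum_k |b_k|^2 t^{2m_k}$, and then $\int_0^1 t^{2n-1}(1-t^2)^p t^{2m_k}\,dt \simeq \Gamma$-function asymptotics $\simeq m_k^{-(p+1)}$, giving the lower comparison after interchanging sum and integral. For the $\sup$ quantity one bounds $\sup_{|w|=t}|f(w)|^2 \le \big(\sum_k |b_k| t^{m_k}\big)^2$ and expands; the gap condition controls the cross terms $\sum_{j,k}|b_j||b_k| t^{m_j+m_k}$ so that, after integrating, the off-diagonal contribution is dominated by the diagonal one $\sum_k |b_k|^2 m_k^{-(p+1)}$.

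I expect the main obstacle to be the cross-term / off-diagonal bookkeeping in the upper estimate for part (a): one must show $\sum_{j\ne k}|b_j||b_k|\int_0^1 t^{2n-1}(1-t^2)^p t^{m_j+m_k}\,dt \lesssim \sum_k |b_k|^2 m_k^{-(p+1)}$, which requires using the lacunarity $m_{k+1}/m_k\ge c>1$ to beat the polynomially-decaying integrals $\simeq (m_j+m_k)^{-(p+1)}$ via a geometric-series argument (e.g. Cauchy–Schwarz plus $\sum_{j}(m_j+m_k)^{-(p+1)} \lesssim m_k^{-(p+1)}$ using geometric growth). The orthogonality step for the $L^2$ lower bound is routine, and the single-index lower bound for part (b)—and hence for (a) if one prefers to lower-bound $\|f\|_p$ directly via test points rather than via $\|\cdot\|_{A^2_p}$—follows the same "pick the extremal $\xi$ for $P_{m_k}$, kill the tail with lacunarity" pattern. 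A small technical point to handle carefully is uniformity of all implied constants in $k$ (not just for fixed $k$), which the geometric gap condition is exactly designed to provide.
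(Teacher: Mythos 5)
Your part (a) and the upper bound in part (b) are fine. For (a), the lower bound via $\|f\|_p\ge\|f\|_{A^2_p}$, orthogonality of homogeneous components on $\SSS$, \eqref{RWprobe} and Gamma-function asymptotics is exactly the paper's argument; for the upper bound the paper combines Proposition \ref{L:normEstimateNp} with the Mateljevi\'c--Pavlovi\'c theorem \cite{MM}, whereas you expand $\big(\sum_k|b_k|t^{m_k}\big)^2$ and estimate the off-diagonal terms directly. That cross-term estimate does go through: with $x_k=|b_k|m_k^{-(p+1)/2}$, for $j<k$ one has $(m_j+m_k)^{-(p+1)}\le m_k^{-(p+1)}$ and $m_j^{(p+1)/2}m_k^{-(p+1)/2}\le c^{-(k-j)(p+1)/2}$, and the resulting geometric convolution is summed by Cauchy--Schwarz; so your route is a legitimate, more self-contained alternative to citing \cite{MM}. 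Your dyadic-block proof that $\sum_k m_k^q t^{m_k}\lesssim (1-t)^{-q}$ is equivalent to the paper's computation for the upper bound in (b).

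The genuine gap is your lower bound in (b). The single-test-point estimate $|f(r\xi)|\ge |b_k|r^{m_k}-\sum_{j\ne k}|b_j|r^{m_j}$ cannot be closed for an arbitrary gap ratio $c>1$: at the radius $1-r\simeq 1/m_k$ the discarded terms need not be small compared with the main term. For instance, if $m_{j+1}/m_j\simeq c$ and $|b_j|=m_j^q$ (such an $f$ does lie in $A^{-q}$ by your own upper bound), then $\sum_{j<k}|b_j|r^{m_j}\simeq \frac{c^{-q}}{1-c^{-q}}\,m_k^q$, which for $c$ close to $1$ dwarfs $|b_k|r^{m_k}\simeq m_k^q$; moreover the phases of $P_{m_j}(\xi)$ at your chosen $\xi$ are not under control, so genuine cancellation can make the right-hand side negative. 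Trying instead to majorize the tail by $|b_j|\le Lm_j^q$ with $L=\sup_j|b_j|m_j^{-q}$ only yields $L\le C_1|f|_q+C(c,q)L$ with $C(c,q)\ge 1$ in general, which is circular. The paper sidesteps this by integrating $|f(r\xi)|^2$ over $\SSS$: homogeneous polynomials of distinct degrees are orthogonal in $L^2(\sigma)$, so the cross terms vanish identically, and \eqref{RWprobe} gives $\int_\SSS|P_{m_k}|^2\,d\sigma\gtrsim 1$, whence $|b_k|^2r^{2m_k}\lesssim |f|_q^2(1-r)^{-2q}$ and the choice $r=m_k/(q+m_k)$ finishes. (An alternative fix: recover the homogeneous component $b_kP_{m_k}(z)=\frac{1}{2\pi}\int_0^{2\pi}f(e^{i\theta}z)e^{-im_k\theta}\,d\theta$, which gives $|b_k|r^{m_k}\le\sup_{|w|=r}|f(w)|$ with no tail to control.) As written, your test-point argument is the one step that would fail.
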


Note that Theorem \ref{T:HadamardGapClass}, for $n=1$, contains the corresponding results in \cite{Pal} as particular cases.

\begin{proof}
(a) Consider $0<p\leq n$. Since $|P_{m_k}(w)|\leq |w|^{m_k}$ for all $k\geq 0$ and $w\in\B$, we have
$$
\sup_{|w|=|z|}|f(w)| \leq\sum_{k=0}^{\infty}|b_k||z|^{m_k}
$$
for any $z\in\B$. Proposition \ref{L:normEstimateNp} and integration in polar coordinates then give
\begin{align*}
\|f\|_{p}^2 \lesssim \int_{0}^{1}\Big(\sum_{k=0}^{\infty} |b_k|r^{m_k}\Big)^2(1-r^2)^p\,dr.
\end{align*}
On the other hand, by \cite[Theorem 1]{MM},
\begin{align*}
\int_{0}^{1}\Big(\sum_{k=0}^{\infty} |b_k|r^{m_k}\Big)^2(1-r^2)^p\,dr \simeq \sum_{k=0}^{\infty}2^{-k(p+1)}\Big(\sum_{2^k\leq m_j <2^{k+1}}|b_j|\Big)^{2}.
\end{align*}
Since $m_{j+1}\geq c\,m_j$ for all $j$, the cardinality of $\{j: 2^{k}\leq m_j <2^{k+1}\}$ is at most $1+\log_{c}2$. It then follows that
\begin{align*}
\sum_{k=0}^{\infty}2^{-k(p+1)}\Big(\sum_{2^k\leq m_j <2^{k+1}}|b_j|\Big)^{2} &
\lesssim \sum_{k=0}^{\infty}2^{-k(p+1)}\Big(\sum_{2^k\leq m_j < 2^{k+1}}|b_j|^2\Big)\\
&\lesssim \sum_{k=0}^{\infty}\Big(\sum_{2^k\leq m_j <2^{k+1}}m_j^{-(p+1)}|b_j|^2\Big)\\
& =\sum_{k=0}^{\infty}\frac{|b_k|^2}{m_k^{p+1}}.
\end{align*}
Combining the above estimates, we obtain
$
\displaystyle\|f\|_p^2  \lesssim \sum_{j=0}^{\infty}\dfrac{|b_k|^2}{m_k^{p+1}}.
$

To prove the reverse inequality, we use the orthogonality of homogeneous polynomials of different degrees in $A^2_p$ to obtain
\begin{align*}
\|f\|_{p}^2 \geq \|f\|^2_{A^2_p} & =\int_{\B}\Big|\sum_{k=0}^{\infty}b_kP_{m_k}(z)\Big|^2 (1-|z|^2)^p\,dV(z)\\
& = \sum_{k=0}^{\infty}|b_k|^2\int_{\B}|P_{m_k}(z)|^2(1-|z|^2)^p\,dv(z)\\
& = \sum_{k=0}^{\infty}|b_k|^2\int_{0}^{1}2nr^{2n+2m_k-1}(1-r^2)^p\,dr\int_{\SSS}|P_{m_k}(\xi)|^2\,d\sigma(\xi)\\
& \gtrsim\sum_{k=0}^{\infty}|b_k|^2\int_{0}^{1} t^{n+m_k-1}(1-t)^p\,dt\\
& \text{(by \eqref{RWprobe} and the change of variables $t=r^2$)}\\
& =\sum_{k=0}^{\infty}|b_k|^2\dfrac{\Gamma(n+m_k)\Gamma(p+1)}{\Gamma(n+m_k+p+1)} \gtrsim\sum_{k=0}^{\infty}\dfrac{|b_k|^2}{m_k^{p+1}}.
\end{align*}
The last inequality follows from Stirling's formula. We have thus completed the proof of (a).

(b) Assume $f\in A^{-q}$ for $q>0$. Fix a positive integer $k$. For $r>0$ and $\xi\in\SSS$, we have
$|f|^2_q(1-r)^{-2q}\geq |f(r\xi)|^2$. Integrating with respect to $\xi\in\SSS$ and using \eqref{RWprobe} yield
\begin{align*}
\dfrac{|f|^2_q}{(1-r)^{2q}} & \geq \int_{\SSS}|f(r\xi)|^2\,d\sigma(\xi) =\int_{\SSS}\Big|\sum_{j=0}^{\infty}b_jr^{m_j}P_{m_j}(\xi)\Big|^2\,d\sigma(\xi)\\
& = \sum_{j=0}^{\infty}|b_j|^2r^{2m_j}\int_{\SSS}|P_{m_j}(\xi)|^2\,d\sigma(\xi)\gtrsim |b_k|^2r^{2m_k}.
\end{align*}
Setting $r=m_k/(q+m_k)$, we obtain
\begin{align*}
|b_k| \lesssim \dfrac{|f|_q}{r^{m_k}(1-r)^q} = |f|_q\Big(1+\frac{q}{m_k}\Big)^{m_k}\Big(1+\frac{m_k}{q}\Big)^{q} \lesssim |f|_qm_k^{q},
\end{align*}
which implies $\sup_{k}\dfrac{|b_k|}{m_k^q} \lesssim |f|_q$.

Put $L=\sup_{k}\big\{|b_k|m_k^{-q}\big\}$ so $|b_k|\leq L\,m_k^{q}$ for all $k\geq 0$. For each $z \in \B$, we have
\begin{align}
\label{Eqn:fz}
\frac{|f(z)|}{1-|z|}%
&\le \Big( \sum_{k=0}^\infty |b_k||P_{m_k}(z)| \Big) \Big( \sum_{s=0}^\infty |z|^s\Big)\notag\\
&\le L\Big( \sum_{k=0}^\infty m_k^{q}|z|^{m^k} \Big) \Big( \sum_{s=0}^\infty |z|^s\Big)\\
&\quad\text{(since $|P_{m_k}(z)|\leq |z|^{m_k}$ for all $k\geq 0$)}\notag\\
& = L\sum_{\ell=1}^\infty \Big( \sum_{m_k \le\ell} m_k^{q}\Big) |z|^{\ell}.\notag
\end{align}
Since $m_{k+1}/m_k\geq c>1$ for all $k$, we have
\begin{align*}
\sum_{m_k\leq\ell}m_k^q & =\ell^{q}\sum_{m_k\leq\ell}\Big(\dfrac{m_k}{\ell}\Big)^q
 \leq \ell^q\sum_{s=0}^{\infty}(c^{-q})^{s} = \dfrac{\ell^q}{1-c^{-q}}.
\end{align*}
By Stirling's formula, it follows that
\begin{align}
\label{Eqn:SumMk}
\sum_{m_k\leq\ell}m_k^q & \leq \dfrac{\ell^q}{1-c^{-q}} \lesssim \dfrac{1}{1-c^{-q}}\dfrac{\Gamma(\ell+q+1)}{\Gamma(\ell+1)\,\Gamma(q)}.
\end{align}
Combing \eqref{Eqn:fz} and \eqref{Eqn:SumMk} yields
\begin{align*}
\frac{|f(z)|}{1-|z|} & \lesssim\dfrac{L}{1-c^{-q}}\sum_{\ell=0}^{\infty}\dfrac{\Gamma(\ell+q+1)}{\Gamma(\ell+1)\,\Gamma(q)}|z|^{\ell} = \dfrac{L}{1-c^{-q}}\dfrac{1}{(1-|z|)^{q+1}}.
\end{align*}
Consequently,
\begin{align*}
|f|_{q} & = \sup_{z\in\B}|f(z)|(1-|z|)^{q} \lesssim L = \sup_{k}\dfrac{|b_k|}{m_k^q}.
\end{align*}
This completes to proof of (b).
\end{proof}

\begin{cor}
If $0<p_1<p_2\leq n$, then we have
$$\mathcal{N}_{p_1}\subsetneq \mathcal{N}_{p_2}\subsetneq A^{-\frac{n+1}{2}}.$$
\end{cor}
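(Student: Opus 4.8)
The plan is to combine the already-established inclusion chain from Theorem~\ref{basic}(a) with the Hadamard gap estimates from Theorem~\ref{T:HadamardGapClass} to show that each inclusion is strict. The non-strict inclusions $\calN_{p_1}\hookrightarrow\calN_{p_2}\hookrightarrow A^{-(n+1)/2}$ are already given by Theorem~\ref{basic}(a), since $0<p_1<p_2\le n$ implies $p_2\le n<\infty$ and hence $\calN_{p_2}\hookrightarrow A^{-(n+1)/2}$. So the entire content of the corollary is to produce, for the first inclusion, a function lying in $\calN_{p_2}\setminus\calN_{p_1}$, and for the second, a function lying in $A^{-(n+1)/2}\setminus\calN_{p_2}$.

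For the first separation, I would take a lacunary sequence $m_k$ (e.g. $m_k=2^k$, so $c=2$) and a Hadamard gap function $f=\sum_k b_k P_{m_k}$ with coefficients $b_k$ chosen so that $\sum_k |b_k|^2 m_k^{-(p_2+1)}<\infty$ but $\sum_k |b_k|^2 m_k^{-(p_1+1)}=\infty$. Since $p_1<p_2$, we have $m_k^{-(p_1+1)}=m_k^{p_2-p_1}\cdot m_k^{-(p_2+1)}$, and the factor $m_k^{p_2-p_1}\to\infty$; concretely, taking $b_k=m_k^{(p_2+1)/2} k^{-1}$ (say) makes the first series $\sum k^{-2}$ convergent and the second $\sum m_k^{p_2-p_1} k^{-2}=\sum 2^{k(p_2-p_1)}k^{-2}$ divergent. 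By Theorem~\ref{T:HadamardGapClass}(a) (applicable since both $p_1,p_2\le n$), $\|f\|_{p_2}^2\simeq\sum_k|b_k|^2 m_k^{-(p_2+1)}<\infty$ and $\|f\|_{p_1}^2\simeq\sum_k|b_k|^2 m_k^{-(p_1+1)}=\infty$, so $f\in\calN_{p_2}\setminus\calN_{p_1}$, giving $\calN_{p_1}\subsetneq\calN_{p_2}$.

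For the second separation, I would use both parts of Theorem~\ref{T:HadamardGapClass} with $q=(n+1)/2$. I want a Hadamard gap function $g=\sum_k b_k P_{m_k}$ with $\sup_k |b_k| m_k^{-(n+1)/2}<\infty$ but $\sum_k |b_k|^2 m_k^{-(p_2+1)}=\infty$. Taking $b_k=m_k^{(n+1)/2}$ makes $\sup_k|b_k|m_k^{-(n+1)/2}=1<\infty$, so $g\in A^{-(n+1)/2}$ by part~(b); meanwhile $\sum_k|b_k|^2 m_k^{-(p_2+1)}=\sum_k m_k^{n-p_2}$, which diverges because $p_2\le n$ forces $m_k^{n-p_2}\ge 1$ (it is $\equiv 1$ when $p_2=n$ and $\to\infty$ when $p_2<n$). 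Hence by part~(a), $\|g\|_{p_2}^2\simeq\sum_k m_k^{n-p_2}=\infty$, so $g\notin\calN_{p_2}$, giving $\calN_{p_2}\subsetneq A^{-(n+1)/2}$.

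I do not anticipate a genuine obstacle here: the corollary is a direct packaging of the two theorems, and the only point requiring minor care is checking that the chosen coefficient sequences indeed land on the correct side of each convergence/divergence dichotomy, in particular handling the boundary case $p_2=n$ in the second separation (where $m_k^{n-p_2}=1$ and the series still diverges because it has infinitely many terms). One should also note in passing that Theorem~\ref{T:HadamardGapClass}(a) requires $0<p\le n$, which is exactly the hypothesis of the corollary, so its application to both $p_1$ and $p_2$ is legitimate.
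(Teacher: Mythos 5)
Your proposal is correct and takes essentially the same route as the paper: both separations come from applying Theorem \ref{T:HadamardGapClass} to lacunary series $\sum_k b_k P_{2^k}$, your second function (with $b_k=2^{k(n+1)/2}$) being exactly the paper's example, and your first differing only in the cosmetic choice $b_k=m_k^{(p_2+1)/2}k^{-1}$ where the paper uses $b_k=2^{k(1+p_1)/2}$ so that the $\calN_{p_1}$-series diverges as $\sum_k 1$. The only (trivial) adjustment needed is to start the index at $k=1$ so that the factor $k^{-1}$ is defined.
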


\begin{proof}
Define
\begin{equation} \label{f1andf2}
f_1(z)=\sum_{k=0}^\infty 2^{\frac{k(n+1)}{2}} P_{2^k}(z), \quad f_2(z)=\sum_{k=0}^\infty 2^{\frac{k(1+p_1)}{2}} P_{2^k}(z)
\end{equation}
for $z\in\B$. Using Theorem \ref{T:HadamardGapClass}, it can be checked with a direct computation that $f_1 \in A^{-\frac{n+1}{2}} \backslash \calN_{p_2}$ and $f_2 \in  \calN_{p_2} \backslash \calN_{p_1}$.
\end{proof}


\bigskip

\end{document}